        \definecolor{brown}{rgb}{1,0,1}
\numberwithin{equation}{section}
\newtheorem{theo}{Theorem}[section]
\theoremstyle{definition}
\theoremstyle{remark}
\newtheorem{rema}[theo]{Remark}
\newcommand{\nwc}{\newcommand}
\nwc{\eps}{\epsilon}
\nwc{\vareps}{\varepsilon}
\nwc{\Oph}{\operatorname{Op}_\hbar}
\nwc{\ra}{\rangle}
\nwc{\la}{\lambda}
\nwc{\mf}{\mathbf} 
\nwc{\blds}{\boldsymbol} 
\nwc{\ml}{\mathcal} 
\nwc{\defeq}{\stackrel{\rm{def}}{=}}
\nwc{\cE}{\ml{E}}
\nwc{\cN}{\ml{N}}
\nwc{\cO}{\ml{O}}
\nwc{\cP}{\ml{P}}
\nwc{\cU}{\ml{U}}
\nwc{\cV}{\ml{V}}
\nwc{\cW}{\ml{W}}
\nwc{\tU}{\widetilde{U}}
\nwc{\IN}{\mathbb{N}}
\nwc{\IR}{\mathbb{R}}
\nwc{\IZ}{\mathbb{Z}}
\nwc{\IC}{\mathbb{C}}
\nwc{\IT}{\mathbb{T}}
\nwc{\tP}{\widetilde{P}}
\nwc{\tPi}{\widetilde{\Pi}}
\nwc{\tV}{\widetilde{V}}
\nwc{\supp}{\operatorname{supp}}
\nwc{\rest}{\restriction}
\newcommand{\R}{{\mathbb R}}
\newcommand{\Z}{{\mathbb Z}}
\renewcommand{\phi}{\varphi}
\newtheorem{lem}[theo]{{\sc Lemma}}
\title [Applications of quantum ergodicity in nodal sets] {Applications of small scale quantum ergodicity in nodal sets}
\author{Hamid Hezari }
\address{Department of Mathematics, UC Irvine, Irvine, CA 92617, USA} \email{hezari@math.uci.edu}
\begin{document}


\maketitle

\begin{abstract}
The goal of this article is to draw new applications of small scale quantum ergodicity in nodal sets of eigenfunctions. We show that if quantum ergodicity holds on balls of shrinking radius $r(\lambda) \to 0$ then one can achieve improvements on the recent upper bounds of Logunov \cite{La} and Logunov-Malinnikova \cite{LM} on the size of nodal sets, according to a certain power of $r(\lambda)$.  We also show that the order of vanishing results of Donnelly-Fefferman \cite{DF1, DF2}  and Dong \cite{Dong} can be improved. Since by \cite{Han, HeRi} small scale QE holds on negatively curved manifolds at logarithmically shrinking rates, we get logarithmic improvements on such manifolds for the above measurements of eigenfunctions. We also get $o(1)$ improvements for manifolds with ergodic geodesic flows.  Our results work for a full density subsequence of any given orthonormal basis of eigenfunctions.

\end{abstract}

\section{Introduction} Let $(X,g)$ be a smooth compact connected boundaryless Riemannian manifold of dimension $n$.  Suppose $\Delta_g$ is the positive Laplace-Beltrami operator on $(X, g)$ and $\psi_\lambda$ is a sequence of $L^2$ normlazied of eigenfunctions of $\Delta_g$ with eigenvalues $\lambda$.  It was shown in \cite{HeRi} that if for some shrinking radius $r=r(\lambda) \to 0$ and for all geodesic balls $B_r(x)$ one has $ K_1 r^n  \leq || \psi_\lambda ||^2_{B_r(x)} \leq K_2 r^n $, then one gets improved upper bounds \footnote{It is shown by Sogge \cite{So15} that $|| \psi_\lambda ||^2_{B_r(x)} \leq K_2 r^n$ suffices. } of the form $(r^2\lambda)^{ \delta(p)}$ on the $L^p$ norms of $\psi_\lambda$, where ${\delta(p)}$ is Sogge's exponent. The purpose of this article is to prove more applications of small-scale $L^2$ equidistribution of eigenfunctions. We will show that upper bounds on the size of nodal sets as well as the order of vanishing of eigenfunctions can be improved by certain powers of $r$. Since by \cite{HeRi}\footnote{In \cite{Han}, this is proved for $\kappa \in (0, \frac{1}{3n})$} such equidistribution properties hold on negatively curved manifolds \footnote{For a full density subsequence of any given ONB of eigenfunctions.} with $r= (\log \lambda)^{-\kappa}$ for any $\kappa \in (0, \frac{1}{2n})$, we obtain improvements of the results of Logunov \cite{La}, Logunov-Malinnikova \cite{LM}, Donnelly-Fefferman \cite{DF1, DF2}, and Dong \cite{Dong}. We also get slight improvements for quantum ergodic eigenfunctions because roughly speaking they equidistribute on balls of radius $r=o(1)$.

In the following $\mathcal{H}^{n-1} (Z_{\psi_\lambda})$ means the $(n-1)$-dimensional Hausdorff measure of the nodal set of $\psi_\lambda$ denoted by $Z_{\psi_\lambda}$, and $\nu_x(\psi_\lambda)$ means the order of vanishing of $\psi_\lambda$ at a point $x$ in $X$. 

We recall that for $n \geq 3$, a recent result of Logunov \cite{La} gives a polynomial upper bound for $\mathcal{H}^{n-1} (Z_{\psi_\lambda})$ of the form $\lambda^ \alpha$ for some $\alpha > \frac{1}{2}$ depending only on $n$,  and  for $n=2$ another recent result of Logunov-Malinnikova \cite{LM} shows upper bounds of the form $\lambda^{\frac{3}{4}- \beta}$ for some small universal $\beta \in (0, \frac{1}{4})$. Our first result is the following refinement of the results of the above mentioned authors and also the order of vanishing results of Donnelly-Fefferman \cite{DF1, DF2} and Dong \cite{Dong}. 

\begin{theo} \label{UpperBound} Let $(X, g)$ be a boundaryless compact Riemannian manifold with volume measure $dv_g$, and $\psi_\lambda$ be an eigenfunction of $\Delta_g$ of eigenvalue $\lambda >0$. Then there exists $r_0(g) >0$ such that if $ \lambda^{-1/2} < r_0(g)$, and if for some $r \in [\lambda^{-1/2}, r_0(g)]$ and for all geodesic balls $\{B_{r}(x)\}_{x \in X}$ we have
\begin{equation} \label{SmallScaleAssumption}
K_1 r^n\leq \int _{B_{r}(x)} | \psi_{\lambda}|^2 dv_g \leq K_2 r^n,
\end{equation} for some positive  constants $K_1$ and $K_2$ independent of $x$, 
then:

For $n \geq 3$
\begin{equation}\label{RefinedUB} \mathcal{H}^{n-1} (Z_{\psi_\lambda}) \leq c_1 r^{2\alpha-1} \lambda^\alpha, \end{equation}
\begin{equation}\label{RefinedOV} \nu_x(\psi_\lambda) \leq c_2 r \sqrt{\lambda} . \end{equation}
For $n=2$
\begin{equation} \label{RefinedUB2} \mathcal{H}^{1} (Z_{\psi_\lambda}) \leq c_3  r^{\frac{1}{2}-2 \beta}\lambda^{\frac{3}{4}-\beta} , \end{equation}
\begin{equation}\label{RefinedOV2}  \sum_{z \in Z_{\psi_\lambda} \cap B_{r^\frac{1}{2}\lambda^{-\frac{1}{4}}}(x)}  \left ( \nu_z(\psi_\lambda)-1 \right) \leq c_4 r \sqrt{\lambda}. \end{equation}
Here, $\alpha=\alpha(n) >\frac{1}{2}$ and $\beta  \in (0, \frac{1}{4})$ are the universal exponents from \cite{La} and \cite{LM}, and the constants $c_1, c_2, c_3, c_4$ are positive and depend only on $(X, g)$, $K_1$, and $K_2$, and are independent of $\lambda$, $r$, and $x$. Note that the quantity on the left hand side of (\ref{RefinedOV2}) counts the number of singular points $\mathcal S=\{ \psi_\lambda= |\nabla \psi_\lambda |=0 \}$ in geodesic balls of radius $r^\frac{1}{2}\lambda^{-\frac{1}{4}}$.
\end{theo}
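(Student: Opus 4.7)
The plan is to reduce all four bounds to a single doubling-index estimate for the harmonic extension of $\psi_\lambda$ under hypothesis \eqref{SmallScaleAssumption}, then feed that estimate into the \emph{local} versions of the theorems of \cite{La, LM, DF1, DF2, Dong}.

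First I would set up the standard harmonic extension
\begin{equation*}
F(x,t) \ := \ \psi_\lambda(x)\, \cosh(t\sqrt{\lambda}) \qquad \text{on } (X \times \R,\ g \oplus dt^2).
\end{equation*}
Then $\Delta_{g\oplus dt^2} F = 0$, its zero set is $Z_F = Z_{\psi_\lambda}\times\R$, and $\nu_{(x_0,0)}(F)=\nu_{x_0}(\psi_\lambda)$; by Fubini,
\begin{equation*}
\mathcal{H}^{n-1}\bigl(Z_{\psi_\lambda}\cap B_r^X(x_0)\bigr) \ \leq \ \frac{C}{r}\, \mathcal{H}^{n}\bigl(Z_F \cap B_r^{X\times\R}(x_0,0)\bigr).
\end{equation*}
The hypothesis $r<r_0(g)$ guarantees that $g\oplus dt^2$ is uniformly comparable to a Euclidean metric on such balls, so the Euclidean local results of \cite{La,LM,DF1,DF2,Dong} apply to $F$.

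Next I would bound the Logunov doubling index of $F$. Using $1 \leq \cosh(t\sqrt{\lambda}) \leq e^{|t|\sqrt{\lambda}}$ together with \eqref{SmallScaleAssumption}, Fubini gives
\begin{equation*}
\int_{B_{2r}^{X\times\R}(x_0,0)} |F|^2 \ \leq \ C\, K_2\, r^{n+1}\, e^{\,4 r\sqrt{\lambda}}, \qquad
\int_{B_r^{X\times\R}(x_0,0)} |F|^2 \ \geq \ c\, K_1\, r^{n+1},
\end{equation*}
so the $L^2$-doubling index satisfies $N(F; B_r^{X\times\R}) \leq C\bigl(1 + r\sqrt{\lambda}\bigr) \leq C'\, r\sqrt{\lambda}$ on the range $r \geq \lambda^{-1/2}$. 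The same argument, applied to sub-balls of radius $R \leq r/2$, gives $N(F; B_R^{X\times\R}) \leq C'\, r\sqrt{\lambda}$ as well.

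Finally I would plug this into the local theorems. For $n\geq 3$, the local Logunov estimate $\mathcal{H}^n(Z_F\cap B_r)\leq Cr^n N^{2\alpha}$ gives
\begin{equation*}
\mathcal{H}^{n-1}\bigl(Z_{\psi_\lambda}\cap B_r^X(x_0)\bigr) \ \leq \ C\, r^{n-1}\,(r\sqrt{\lambda})^{2\alpha};
\end{equation*}
summing over a cover of $X$ by $O(r^{-n})$ such balls yields \eqref{RefinedUB}. The Donnelly--Fefferman/Dong local vanishing bound $\nu(F;(x_0,0)) \leq CN$ yields \eqref{RefinedOV} immediately. For $n=2$ the same recipe, with the Logunov--Malinnikova local bound $\mathcal{H}^2(Z_F\cap B_r) \leq Cr^2 N^{\frac{3}{2}-2\beta}$ in place of Logunov's, produces \eqref{RefinedUB2}. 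Finally \eqref{RefinedOV2} follows by applying Dong's singular-point count $\sum_z(\nu_z-1) \leq C N(B_R)$ on the single ball of radius $R = r^{1/2}\lambda^{-1/4}$ (which satisfies $R \leq r/2$) and using the sub-$r$ doubling bound $N(B_R) \leq C' r\sqrt{\lambda}$.

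The main obstacle will be the doubling-index step: one needs \eqref{SmallScaleAssumption}, which lives on geodesic balls of the base $X$, to propagate uniformly to Euclidean balls of $X\times\R$ with constants independent of $x_0$, $r$, and $\lambda$. Both the smallness condition $r<r_0(g)$ (for metric comparability at scale $r$) and the \emph{two}-sided nature of \eqref{SmallScaleAssumption} are essential here -- the lower bound $K_1 r^n$ is what prevents the doubling index from blowing up with $\lambda$, while the upper bound $K_2 r^n$ is what controls the growth contribution of the $\cosh$-factor and keeps all constants uniform in $x_0$.
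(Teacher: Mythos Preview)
Your overall strategy---harmonic extension plus the local theorems of Logunov, Logunov--Malinnikova, Donnelly--Fefferman, and Dong---matches the paper's. The gap is in the doubling-index step.

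You correctly bound $N(F;B_r(x_0,0))\le C'r\sqrt\lambda$ at the single scale $r$, using \eqref{SmallScaleAssumption} for both the numerator and the denominator. But the claim that ``the same argument, applied to sub-balls of radius $R\le r/2$'' yields the same bound is wrong as stated: hypothesis \eqref{SmallScaleAssumption} says nothing about $\int_{B_R^X}|\psi_\lambda|^2$ for $R<r$, so at scale $R$ you have neither an upper bound on the numerator nor a lower bound on the denominator. Logunov's theorem requires control of the doubling index on \emph{all} sub-balls of the cube, so this is exactly where the work lies.

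The paper closes this gap with Lemma~\ref{RefinedDoublingEstimate}, which proves $\sup_{B_{2\delta}}|\psi_\lambda|^2\le e^{cr\sqrt\lambda}\sup_{B_\delta}|\psi_\lambda|^2$ for \emph{every} $\delta<10r$. The mechanism is a rescaling $\tilde g=r^{-2}g$, $\tilde\lambda=r^2\lambda$, fed into the Donnelly--Fefferman doubling inequality, whose right-hand side involves a ratio $\sup_{B_h}|\psi_\lambda|^2 \big/ \sup_{B_{h/5}\setminus B_{h/10}}|\psi_\lambda|^2$ at a single scale $h\sim r$ that one \emph{can} estimate from \eqref{SmallScaleAssumption}. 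An alternative route, closer to your setup, would be to invoke almost-monotonicity of the frequency function for the harmonic $F$: once $N(F;B_r(x_1,0))\le C'r\sqrt\lambda$ is known for every center $x_1$, monotonicity propagates it down to all $R<r$. Either way, some nontrivial input (Carleman estimates or frequency monotonicity) is required; it does not follow from ``the same argument''. Your closing paragraph misidentifies the main obstacle as uniformity in $x_0$---the real issue is uniformity in the \emph{scale}.

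A smaller point: for \eqref{RefinedUB2} and \eqref{RefinedOV2} the paper stays on the surface, rescaling and applying Logunov--Malinnikova and Dong directly to $\psi_\lambda$ rather than to the harmonic extension; in particular Dong's singular-point estimate is formulated for eigenfunctions on a surface and involves the doubling of $q_\lambda=|\nabla\psi_\lambda|^2+\tfrac{\lambda}{2}|\psi_\lambda|^2$, not of $F$.
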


Combining this with our result with Rivi\`ere \cite{HeRi}, which states that  on negatively curved manifolds (\ref{SmallScaleAssumption}) holds with $r= (\log \lambda)^{-\kappa}$ for any $\kappa \in (0, \frac{1}{2n})$, the following unconditional results on such manifolds are immediate.

\newpage

\begin{theo}\label{UpperBound-NC}
Let $(X, g)$ be a boundaryless compact connected smooth Riemannian manifold of dimension $n$, with negative sectional curvatures.  Let $\{ \psi_{\lambda_j} \}_{j \in \IN}$ be any ONB of $L^2(X)$ consisting of eigenfunctions of $\Delta_g$ with eigenvalues $\{\lambda_j\}_{j \in \IN}$. Let $\epsilon >0$ be arbitrary. Then there exists $S \subset \IN$ of \textit{full density} \footnote{It means that $ \lim_{N \to \infty} \frac{1}{N} \text{card} \big ( S \cap [1, N] \big )  =1$.} such that for $j \in S$:
$$ \text{if} \; \;  n\geq 3: \qquad \mathcal{H}^{n-1} (Z_{\psi_{\lambda_j}}) \leq  c_1 (\log \lambda_j)^{\frac{1-2\alpha}{2n} +\epsilon} \lambda_j^\alpha,$$
$$ \text{if} \; \;  n=2 : \qquad  \mathcal{H}^{1} (Z_{\psi_{\lambda_j}}) \leq  c_3 (\log \lambda_j)^{-\frac{1}{8}+\frac{\beta}{2} + \epsilon} \lambda_j^{\frac{3}{4}-\beta}.$$ 
In addition, for all dimensions 
$$ \qquad \qquad \quad \nu_x(\psi_{\lambda_j}) \leq  c_2 (\log \lambda_j)^{-\frac{1}{2n}+\epsilon} \sqrt{\lambda_j}\; .$$ \end{theo}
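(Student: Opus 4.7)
The plan is to deduce Theorem \ref{UpperBound-NC} directly from Theorem \ref{UpperBound} by inserting the small-scale equidistribution radius $r=(\log \lambda)^{-\kappa}$ supplied by \cite{HeRi} and then optimizing in $\kappa$.

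Fix $\epsilon > 0$. By the Hezari-Rivi\`ere theorem \cite{HeRi}, on a compact manifold of negative sectional curvature, for every $\kappa\in(0,\tfrac{1}{2n})$ and every ONB $\{\psi_{\lambda_j}\}_{j\in\IN}$ of $\Delta_g$-eigenfunctions, there exist constants $K_1,K_2>0$ and a full density set $S=S(\kappa)\subset\IN$ such that for all $j\in S$ and all $x\in X$,
\[
K_1 r_j^n \;\leq\; \int_{B_{r_j}(x)}|\psi_{\lambda_j}|^2\, dv_g \;\leq\; K_2 r_j^n, \qquad r_j:=(\log\lambda_j)^{-\kappa}.
\]
For $j$ large, $r_j\in[\lambda_j^{-1/2},r_0(g)]$, so after discarding finitely many indices (which does not affect full density) the hypothesis (\ref{SmallScaleAssumption}) of Theorem \ref{UpperBound} holds with $r=r_j$ for every remaining $j\in S$.

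Feeding $r=r_j$ into the three conclusions of Theorem \ref{UpperBound} yields, for $j \in S$,
\[
\mathcal{H}^{n-1}(Z_{\psi_{\lambda_j}})\leq c_1(\log\lambda_j)^{-\kappa(2\alpha-1)}\lambda_j^\alpha \quad (n\geq 3),
\]
\[
\mathcal{H}^1(Z_{\psi_{\lambda_j}})\leq c_3(\log\lambda_j)^{-\kappa(\tfrac{1}{2}-2\beta)}\lambda_j^{\frac{3}{4}-\beta} \quad (n=2),
\]
\[
\nu_x(\psi_{\lambda_j})\leq c_2(\log\lambda_j)^{-\kappa}\sqrt{\lambda_j} \quad (\text{all } n).
\]
Each logarithmic exponent is strictly negative because $2\alpha-1>0$ (Logunov) and $\tfrac{1}{2}-2\beta>0$ (Logunov-Malinnikova). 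Choosing $\kappa=\tfrac{1}{2n}-\epsilon'$ with $\epsilon'>0$ small enough that each of $\epsilon'(2\alpha-1)$, $\epsilon'(\tfrac{1}{2}-2\beta)$, and $\epsilon'$ is at most $\epsilon$, the three exponents become $\tfrac{1-2\alpha}{2n}+\epsilon$, $-\tfrac{1}{8}+\tfrac{\beta}{2}+\epsilon$, and $-\tfrac{1}{2n}+\epsilon$ respectively, which matches the claim.

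The only point requiring care --- and really the only obstacle --- is invoking the Hezari-Rivi\`ere result in its \emph{uniform-in-$x$} form along the full density subsequence, so that the pointwise-in-$x$ hypothesis (\ref{SmallScaleAssumption}) of Theorem \ref{UpperBound} is genuinely met (as opposed to an averaged or weak-$\ast$ statement which would not suffice for the deterministic bounds on nodal sets and orders of vanishing). Once that is in place the rest is bookkeeping, with $S$ and the constants $c_i$ in Theorem \ref{UpperBound-NC} depending on $\epsilon$ through the chosen $\kappa$.
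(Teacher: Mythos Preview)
Your proposal is correct and follows exactly the paper's approach: the paper simply states that Theorem~\ref{UpperBound-NC} is ``immediate'' from Theorem~\ref{UpperBound} once one plugs in the small-scale equidistribution radius $r=(\log\lambda)^{-\kappa}$, $\kappa\in(0,\tfrac{1}{2n})$, from \cite{HeRi}, and you have carried out precisely that substitution together with the routine bookkeeping in $\kappa$ and $\epsilon$. Your remark about needing the uniform-in-$x$ form of the \cite{HeRi} estimate is apt and is exactly what the paper invokes via \eqref{QENC}.
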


We repeat that here $\alpha=\alpha(n) >\frac{1}{2}$ and $\beta  \in (0, \frac{1}{4})$ are the universal exponents from \cite{La} and \cite{LM}, and $c_1, c_2, c_3$  depend only on $(X, g)$ and $\epsilon$. 

We will also prove the following $o(1)$ improvements for quantum ergodic sequences of eigenfunctions. In fact equidistribution on $X$ (instead of the phase space $S^*X$) suffices.  

\begin{theo} \label{UpperBoundQE} Let $(X, g)$ be a boundaryless compact connected smooth Riemannian manifold of dimension $n$.  Let $\{ \psi_{\lambda_j}\}_{j \in S}$ be a  sequence of eigenfunctions of $\Delta_g$ with eigenvalues $\{\lambda_j\}_{j \in S}$ such that for all $r \in (0, \text{inj}(g)/2)$ and all $x \in X$
\begin{equation} \label{QEonX}
\int_{B_r(x)} |\psi_{\lambda_j}|^2 \to \frac{\text{Vol}_g(B_r(x))}{\text{Vol}_g(X)}, \qquad  \lambda_j \xrightarrow{j \in S} \infty.
\end{equation}
Then, along this sequence, for $n \geq 3$ 
$$ \mathcal{H}^{n-1} (Z_{\psi_{\lambda_j}}) = o ( \lambda_j^\alpha), $$ and for $n=2$
$$  \mathcal{H}^{1} (Z_{\psi_{\lambda_j}})=  o (\lambda_j^{\frac{3}{4}-\beta}). $$ Also in all dimensions 
$$ \qquad \qquad \qquad \quad \quad \nu_x(\psi_{\lambda_j})=o(\sqrt{\lambda_j}) \qquad (\text{uniformly in } \;  x ).$$
\end{theo}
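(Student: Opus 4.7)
The plan is to reduce Theorem \ref{UpperBoundQE} to Theorem \ref{UpperBound} by producing a sequence $r(\lambda_j) \to 0$ along which the small-scale hypothesis (\ref{SmallScaleAssumption}) is satisfied \emph{uniformly in $x$} with fixed constants $K_1, K_2$ independent of $j$. Once such a radius is in hand, plugging $r = r(\lambda_j)$ into (\ref{RefinedUB})--(\ref{RefinedOV2}) yields the desired $o(\cdot)$ improvements, since each of $r(\lambda_j)^{2\alpha-1}$, $r(\lambda_j)^{1/2-2\beta}$ and $r(\lambda_j)$ tends to zero (using $2\alpha - 1 > 0$ and $1/2 - 2\beta > 0$).

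The first and main step is to upgrade the pointwise hypothesis (\ref{QEonX}) to uniformity in the center. Fix a scale $r \in (0, \mathrm{inj}(g)/2)$ and set $f_j(x) := \int_{B_r(x)} |\psi_{\lambda_j}|^2 \, dv_g$. By compactness, choose finitely many centers $x_1, \ldots, x_M \in X$ so that $\{B_{r/10}(x_i)\}$ covers $X$. For any $x \in B_{r/10}(x_i)$ the monotone sandwich
$$B_{9r/10}(x_i) \;\subset\; B_r(x) \;\subset\; B_{11r/10}(x_i)$$
traps $f_j(x)$ between the corresponding integrals at the fixed centers $x_i$. Applying (\ref{QEonX}) at the finitely many pairs $(x_i, 9r/10)$ and $(x_i, 11r/10)$, and using the uniform expansion $\mathrm{Vol}_g(B_\rho(y)) = c_n \rho^n(1 + O(\rho^2))$ valid for small $\rho$, one obtains absolute constants $0 < K_1 < K_2$ (independent of $r$) such that $K_1 r^n \leq f_j(x) \leq K_2 r^n$ uniformly in $x \in X$, provided $j \in S$ is sufficiently large, depending on $r$.

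Next comes a standard diagonal extraction. Take $r_k := 1/k$ for $k \geq k_0$ large enough that $r_{k_0} < r_0(g)$. By the previous step, for each $k$ there is $N_k$ so that (\ref{SmallScaleAssumption}) holds at scale $r_k$ with the fixed constants $K_1, K_2$, uniformly in $x$, for every $j \in S$ with $j \geq N_k$. Enlarging $N_k$ if necessary we may also require $\lambda_j \geq k^2$ for $j \geq N_k$, so that $r_k \geq \lambda_j^{-1/2}$ and the hypothesis of Theorem \ref{UpperBound} is verified at $r = r_k$. Choosing $N_k$ strictly increasing and defining $r(\lambda_j) := r_k$ for $N_k \leq j < N_{k+1}$ yields $r(\lambda_j) \to 0$, and Theorem \ref{UpperBound} applied at scale $r(\lambda_j)$ delivers the three bounds of Theorem \ref{UpperBoundQE}; note that the constants $c_1, c_2, c_3$ depend only on $(X,g)$ and on the absolute $K_1, K_2$, hence are independent of $j$, which gives the asserted uniformity in $x$ for the vanishing-order statement.

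The principal conceptual obstacle is the uniformity step: the hypothesis (\ref{QEonX}) is postulated only pointwise in $(r,x)$, whereas Theorem \ref{UpperBound} requires (\ref{SmallScaleAssumption}) uniformly over all $x \in X$ at the chosen scale. The finite covering together with the volume sandwich resolves this essentially for free precisely because (\ref{QEonX}) is available at \emph{every} fixed scale, allowing one to trade a $\pm 10\%$ loss in the radius for uniformity in the center. Everything else is mechanical.
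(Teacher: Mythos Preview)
Your argument is correct and follows essentially the same route as the paper. The paper isolates the uniformity step as a separate lemma (Lemma~\ref{QElemma}), proved by exactly your covering-and-sandwich trick (with radii $r/2$ and $2r$ in place of your $9r/10$ and $11r/10$), and then remarks that Theorem~\ref{UpperBoundQE} ``follows quickly'' from this lemma combined with Theorem~\ref{UpperBound}; you have simply made that last step explicit via the diagonal extraction $r_k = 1/k$, which the paper leaves to the reader.
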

In particular the above theorem holds for manifolds with ergodic geodesic flows by the quantum ergodicity theorem of Shnirelman-Colin de Verdi\`ere-Zelditch \cite{Sh}-\cite{CdV}-\cite{Ze87}. Hence given any ONB of eigenfunctions on such a manifold one can pass to a full density subsequence where (\ref{QEonX}), whence Theorem \ref{UpperBoundQE} holds. 

\begin{rema} We point out that the equidistribution property (\ref{QEonX}), which is weaker than quantum ergodicity, holds for some non-ergodic manifolds such as the flat torus and the rational polygons (see \cite{MaRu}, \cite{Ri}, and \cite{Taylor}). 
\end{rema}

\subsection{Main idea}
The major idea in proving our upper bounds is to lower the doubling index 
$$ N(B_s(x)):= \log \left ( \frac{\sup_{B_{2s}(x)} |\psi_{\lambda}|^2} { \sup_{B_{s}(x)} |\psi_{\lambda}|^2}  \right )$$ under the assumption
$$K_1 r^n\leq \int _{B_{r}(x)} | \psi_{j}|^2 \leq K_2 r^n.$$ 
We recall that Donnelly-Fefferman \cite{DF1} showed that an eigenfunction $\psi_\lambda$ of $\Delta_g$ with eigenvalue $\lambda$ satisfies
$$ N(B_s(x)) \leq c \sqrt{\lambda},$$ for all $s < s_0$ where $s_0$ and $c$ depend only on $(X, g)$.   
We will prove in Lemma \ref{RefinedDoublingEstimate} that
$$N(B_s(x)) \leq c \, r \sqrt{\lambda},$$ for all $s < 10 r$ where $c$  depends only on $(X, g)$. We then apply this modified growth estimate to the proofs of \cite{La, LM, DF1, DF2, Dong} to obtain our improvements.

\subsection{\textbf{Background on the size of nodal sets}}
For any smooth compact connected Riemannian manifold $(X, g)$ of dimension $n$, Yau's conjecture states that there exist constants $c>0$ and $C>0$ independent of $\lambda$ such that
$$c \sqrt{\lambda} \leq \mathcal H ^{n-1} (Z_{\psi_\lambda}) \leq C \sqrt{\lambda}.$$
The conjecture was proved by Donnelly and Fefferman \cite{DF1} in the real analytic case. In dimension two and the $C^\infty$ case,  Br\"uning \cite{B} and Yau proved the lower bound $c \sqrt{\lambda}$. Until the recent result of Logunov-Mallinikova \cite{LM} the best upper bound in dimension two was $C\lambda^{3/4}$, which was proved independently by Donnelly-Fefferman \cite{DF2} and Dong \cite{Dong}. The result of \cite{LM} gives $C\lambda^{3/4-\beta}$ for some small universal constant $\beta <\frac{1}{4}$. In dimensions $n \geq 3$ until very recently, the best lower bound was $c \lambda^{\frac{3-n}{4}}$, proved \footnote{Different proofs were given later by \cite{HW12, HS12, SZ12} based on the earlier work \cite{SZ11},  and by \cite{St} using heat equation techniques. Also logarithmic improvements of the form $\lambda^{\frac{3-n}{4}} (\log \lambda)^ \alpha$ were given in \cite{HeRi} on negatively curved manifolds and in \cite{BlSo} on non-positively curved manifolds.} by Colding-Minicozzi \cite{CM}. However, a recent breakthrough result of Logunov \cite{Lb} proves the lower bound $c \sqrt{\lambda}$ for all $n \geq 3$. Also another result of \cite{La} shows a polynomial upper bound $C \lambda^\alpha$ for some $\alpha >\frac{1}{2}$ which depends only on $n$. The best upper bound before this was the exponential bound $e^{c \sqrt{\lambda} \log \lambda}$ of Hardt-Simon \cite{HaSi}.   

\subsection{\label{SSQE} \textbf{Background on small scale quantum ergodicity}} First, we recall that the quantum ergodicity result of Shnirelman-Colin de Verdi\`ere-Zelditch \cite{Sh, CdV, Ze87} implies in particular that if the geodesic flow of a smooth compact Riemannian manifold without boundary is ergodic then for any ONB $\{ \psi_{\lambda_j} \}_{j=1}^\infty$ consisting of the eigenfunctions of $\Delta_g$, there exists a full density subset $S \subset \IN$ such that for any $r < \text{inj}(g)$, independent of $\lambda_j$, one has \begin{equation}\label{QE} || \psi_{\lambda_j}||^2_{L^2(B_{r}(x))} \sim \frac{\text{Vol}_g(B_r(x))}{\text{Vol}_g(X)}, \qquad \text{as}\quad  \lambda_j \to \infty, \quad j \in S.  \end{equation}
The analogous result on manifolds with piecewise smooth boundary and with ergodic billiard flows was proved by \cite{ZZ}.  

The small scale equidistribution  problem asks whether (\ref{QE}) holds for $r$ dependent on $\lambda_j$. A quantitative QE result of Luo-Sarnak \cite{LuSa} shows that the Hecke eigenfunctions on the modular surface satisfy this property along a density one subsequence for $r=\lambda^{- \kappa}$ for some small $\kappa>0$.  Also, under the Generalized Riemann Hypothesis, Young \cite{Yo} has proved that small scale equidistribution holds for Hecke eigenfunctions for $r = \lambda^{-1/4 + \epsilon}$.

This problem was studied in \cite{Han} and \cite{HeRi} for the eigenfunctions of negatively curved manifolds. To be precise, it was proved that on compact negatively curved manifolds without boundary, for any $\epsilon >0$ and any ONB $\{ \psi_{\lambda_j} \}_{j=1}^\infty$ of $L^2(X)$ consisting of the eigenfunctions of $\Delta_g$, there exists a subset $S \subset \IN$ of full density such that for all $x \in X$ and $j \in S$:
\begin{equation} \label{QENC} \quad K_1 r^n \leq  || \psi_{\lambda_j}||^2_{L^2(B_{r}(x))} \leq K_2 r^n, \qquad \text{with} \;\; r=(\log \lambda_j)^{-\frac{1}{2n} +\epsilon}, \end{equation} for some positive constants $K_1, K_2$ which depend only on $(X, g)$ and $\epsilon$. The same result was proved in \cite{Han} for $r=(\log \lambda_j)^{-\frac{1}{3n} +\epsilon}$. 

We also point out that although eigenfunctions on the flat torus $\R^n / \Z^n$  are not quantum ergodic, however they equidistribute on the configuration space $\R^n / \Z^n$ (see \cite{MaRu}, and also \cite{Ri} and \cite{Taylor} for later proofs). So one can investigate the small scale equidistribution property for toral eigenfunctions. It was proved in \cite{HeRiTorus} that a commensurability of $L^2$ masses such as (\ref{QENC}) is valid for a full density subsequence with $r = \lambda^{-1/(7n+4)}$. Lester-Rudnick \cite{LeRu} improved this rate of shrinking to  $r= \lambda^{- \frac{1}{2n-2} +\epsilon}$, and in fact they proved that the stronger statement (\ref{QE}) holds.  They also showed that their results are almost \footnote{They show that the equidistribution property fails for $r= \lambda^{- \frac{1}{2n-2} -\epsilon}$} sharp. The case of interest is $n=2$, which gives $r= \lambda^{-1/2 +\epsilon}$. A natural conjecture is that this should be the optimal rate of shrinking on negatively curved manifolds. A recent result of \cite{Han16} proves that random eigenbases on the torus enjoy small scale QE for $r= \lambda^{-\frac{n-2}{4n} +\epsilon}$, which is better than \cite{LeRu} for $n \geq 5$.

\subsection{\textbf{Some remarks}}

\begin{rema} In our proof we have used both \textit{local and global harmonic analysis} (see \cite{Ze08} for background). The local analysis is used in the works of \cite{La, LM}, and the global analysis is used in \cite{HeRi} to obtain equidistribution on small balls. We emphasize that our improvements of \cite{La, LM} are robust, in the sense that any upper bounds of the form $\lambda^{\alpha}$ for $\alpha > \frac{1}{2}$ that are resulted from a purely local analysis of eigenfunctions can be improved using our combined method. 
\end{rema}

\begin{rema}
The most important assumption of Theorem \ref{UpperBound} is the lower bound $K_1 r^n\leq \int _{B_{r}(x)} | \psi_{\lambda}|^2$ and the upper bound in (\ref{SmallScaleAssumption}) can be discarded at the expense of messy estimates in Theorem  \ref{UpperBound}.  In fact using Sogge's ``trivial local $L^2$ estimates" \cite{So15}, which asserts that one always has $ \int _{B_{r}(x)}|\psi_{\lambda}|^2 \leq K_2 r$, we can still prove modified doubling estimates of the form 
$$\sup _{B_{2s}(x)}|\psi_{\lambda}|^2  \leq r^{-b} e^{c r \sqrt{ \lambda} }\sup_{B_{s}(x)}|\psi_{\lambda}|^2, \quad  \text{for some  $b =b(n) >0$ and all $ s <10r$} .$$  We can use this inequality and obtain estimates similar to those in Theorem \ref{UpperBound}, however we have not done so for the sake of more polished estimates. Another reason that we have not discarded the assumption $\int _{B_{r}(x)}|\psi_{\lambda}|^2 \leq K_2 r^n$  is that all the examples (such as QE eigenfunctions) for which we know the lower bounds are satisfied, also satisfy the upper bounds in (\ref{SmallScaleAssumption}). 
\end{rema}

\begin{rema} As we discussed in the previous section a result of \cite{LuSa} implies that small scale QE holds for a full density subsequence of Hecke eigenfunctions on the modular surface, for balls of radius $r= \lambda^{-\kappa}$ for some explicitly calculable $\kappa >0$. Hence using (\ref{RefinedOV}), we get upper bounds of the form $\lambda^{\frac{1}{2} -\kappa}$ on the order of vanishing of these eigenfunctions. We could not find any arithmetic results in the literature discussing improvements on the upper bound $\sqrt{\lambda}$ of Donnelly-Fefferman.   Of course a natural conjecture to impose is that for Hecke eigenfunctions $\nu_x( \psi_\lambda) \leq c \lambda^\epsilon.$ Although the available graphs of nodal lines of Hecke eigenfunctions with high energy do not show any singular points i.e. places where nodal lines intersect each other, but there are many almost intersecting nodal lines.

\end{rema}

\begin{rema} By our discussion in the previous section on the work of \cite{LeRu}, and using (\ref{RefinedOV}), we get that for a full density subsequence of toral eigenfunctions on the $2$-torus, we have $\nu_x(\psi_\la) \leq c \lambda^\epsilon$. However, it is proved in \cite{BoRu} that $\nu_x(\psi_\la) \leq c \lambda^{\frac{1}{ \log \log \lambda}}$ for all eigenfunctions on $\mathbb T^2$. 

\end{rema}

\begin{rema} Theorem \ref{UpperBound} is local in nature, meaning that if the eigenfunctions satisfy (\ref{SmallScaleAssumption}) for balls centered on an open set, then we get the upper bounds in this theorem on that open set. In particular we get all the upper bounds in  Theorem \ref{UpperBoundQE} for eigenfunctions on ergodic billiards (and also rational polygons) as long as we stay a positive distance away from the boundary. One would expect that the results of \cite{La} and \cite{LM} can be extended to the eigenfunctions of the Laplacian on manifolds with boundary (with Dirichlet or Neumann boundary conditions) using the method of \cite{DF3}. 
\end{rema}


\section{Proofs of upper bounds for nodal sets and order of vanishing}
The following lemma is the main ingredient of the proofs. It gives improved growth estimates for eigenfunctions under our $L^2$ assumption on small balls. 

\begin{lem}\label{RefinedDoublingEstimate}
Let $(X, g)$ be a smooth Riemannian manifold, $p \in X$ a fixed point, and $R>0$ be a fixed radius so that the geodesic ball $B_{2R}(p)$ is embedded. Then there exists $r_0(g)$ such that the following statement holds:

Suppose $\lambda^{-1/2} \leq r_0(g)$ and $\psi_\lambda$ is a smooth function such that $\Delta_g \psi_\lambda = \lambda \psi_\lambda$ on $B_{2R}(p)$. If for some $r \in [\lambda^{-1/2}, r_0(g)]$ and all  $x \in B_{R}(p)$,
\begin{equation}\label{L2assumption}
 \quad  K_1 r^n\leq \int _{B_{r}(x)} | \psi_{\lambda}|^2 \leq K_2 r^n
\end{equation} holds for some  positive constants $K_1$ and $K_2$ independent of $x$,  then one has the following refined doubling estimates
\begin{equation} \label{L2DoublingEstimate}
 \delta \in (0, 10r), x \in B_{\frac{R}{2}}(p): \quad \int _{B_{2 \delta}(x)} | \psi_{\lambda}|^2 \leq e^{c \, r \sqrt{\lambda}} \int _{B_{ \delta}(x)} | \psi_{\lambda}|^2,
\end{equation}
\begin{equation} \label{LinftyDoublingEstimate}
\delta \in (0, 10r), x \in B_{\frac{R}{2}}(p): \quad \sup _{B_{2 \delta}(x)} | \psi_{\lambda}|^2 \leq e^{c \, r \sqrt{\lambda}} \sup_{B_{ \delta}(x)} | \psi_{\lambda}|^2.
\end{equation} We also have
\begin{equation} \label{L2LowerBound}
\delta \in (0, r/2), x \in B_{\frac{R}{2}}(p): \quad \frac{1}{\delta^n} \int _{B_{\delta}(x)} | \psi_{\lambda}|^2 \geq \left ( {\frac{r}{\delta}} \right )^{-c \, r \sqrt{\lambda}},
\end{equation} 
\begin{equation} \label{LinftyLowerBound}  \delta \in (0, r/2), x \in B_{\frac{R}{2}}(p): \quad \sup _{B_{\delta}(x)} | \psi_{\lambda}|^2 \geq  \left ( {\frac{r}{\delta}} \right )^{-c \, r \sqrt{\lambda}}.
\end{equation}
Here $c$ is positive and is uniform in $x$, $r$, $\delta$, and $\lambda$, but depends on $K_1$, $K_2$, and $(B_{2R}(p), g)$.
\end{lem}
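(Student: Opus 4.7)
The plan is to reduce the four refined estimates to the classical Donnelly-Fefferman doubling theorem \cite{DF1} via a rescaling argument that converts the problem into one for an eigenfunction of effective eigenvalue $r^2\lambda$ rather than $\lambda$. Fix $x\in B_{R/2}(p)$ and work in geodesic normal coordinates centered at $x$. The key objects are the rescaled function and metric
$$ \tilde\psi(y) := \psi_\lambda\bigl(\exp_x(ry)\bigr), \qquad \tilde g := r^{-2}F^*g, \qquad F(y) = \exp_x(ry), $$
defined on a coordinate ball of radius $\asymp R/r$ centered at the origin. A direct calculation in normal coordinates gives $\tilde g_{ij}(y) = g_{ij}(ry) = \delta_{ij} + O(r^2|y|^2)$, so $\tilde g$ converges to the Euclidean metric in any $C^k$ norm as $r\to 0$ on any fixed ball; in particular its curvature is $O(r^2)$ and its injectivity radius is $\gtrsim 1$ there. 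Moreover, $\tilde\psi$ satisfies $\Delta_{\tilde g}\tilde\psi = (r^2\lambda)\tilde\psi$, with effective eigenvalue $r^2\lambda \geq 1$ thanks to the standing hypothesis $r\geq\lambda^{-1/2}$.

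The first step is to apply a local version of the DF doubling estimate to $\tilde\psi$ on the almost-Euclidean ball $(B_{50}(0),\tilde g)$. Since the DF constants depend only on uniform bounds on curvature, injectivity radius, and volume ratios, they are uniform in $r$ for the whole family $\{\tilde g\}_r$. This yields
$$ \int_{B_{2\tilde\delta}(y)} |\tilde\psi|^2\, dv_{\tilde g} \leq e^{c\sqrt{r^2\lambda}}\int_{B_{\tilde\delta}(y)} |\tilde\psi|^2\, dv_{\tilde g}, $$
together with the analogous $L^\infty$ inequality, both valid for $|y|\leq 20$ and $\tilde\delta \in (0,20)$. Pulling back through $F$, the $\tilde g$-ball of radius $\tilde\delta$ around $y$ corresponds to the $g$-ball of radius $\delta = r\tilde\delta$ around $x'=F(y)$, and the two volume forms differ by a factor of $r^n$ that cancels on both sides. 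Since $\sqrt{r^2\lambda} = r\sqrt{\lambda}$, this proves \eqref{L2DoublingEstimate} and \eqref{LinftyDoublingEstimate} for all $\delta\in(0,10r)$.

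Next, to obtain the lower bounds \eqref{L2LowerBound} and \eqref{LinftyLowerBound}, I would iterate these doubling inequalities dyadically from scale $\delta$ up to scale $r$. Iterating \eqref{L2DoublingEstimate} exactly $k = \lceil\log_2(r/\delta)\rceil$ times gives
$$ \int_{B_r(x)}|\psi_\lambda|^2 \leq e^{c k r\sqrt{\lambda}}\int_{B_\delta(x)}|\psi_\lambda|^2 \leq (r/\delta)^{c_1 r\sqrt{\lambda}}\int_{B_\delta(x)}|\psi_\lambda|^2. $$
Using the hypothesis \eqref{L2assumption} as the base case $\int_{B_r(x)}|\psi_\lambda|^2 \geq K_1 r^n$, I would obtain $\delta^{-n}\int_{B_\delta(x)}|\psi_\lambda|^2 \geq K_1 (r/\delta)^{n - c_1 r\sqrt{\lambda}}$; since $r\sqrt{\lambda}\geq 1$ and $r/\delta \geq 2$, the constant $K_1$ and the $(r/\delta)^n$ factor can be absorbed into the $(r/\delta)$-exponent by slightly enlarging $c_1$, yielding \eqref{L2LowerBound}. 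The proof of \eqref{LinftyLowerBound} is identical in structure: iterate \eqref{LinftyDoublingEstimate} and use the trivial bound $\sup_{B_r(x)}|\psi_\lambda|^2 \geq |B_r|^{-1}\int_{B_r(x)}|\psi_\lambda|^2 \gtrsim K_1$ as the base case.

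The main technical obstacle is justifying the local DF doubling step: the classical statement in \cite{DF1} is global on a fixed compact manifold, whereas I need it locally on the family of almost-Euclidean balls $(B_{50}(0),\tilde g)$ with constants uniform in $r$. This should follow since DF's doubling is ultimately a consequence of either Carleman-type estimates for the Laplacian or the near-monotonicity of Almgren's frequency function, both of which are intrinsically local and depend only on uniform bounds on the metric coefficients, curvature, and injectivity radius---all immediate from the expansion $\tilde g_{ij}(y)=\delta_{ij}+O(r^2|y|^2)$.
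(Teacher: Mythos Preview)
Your rescaling idea and the dyadic iteration for the lower bounds match the paper's strategy. The gap is in your ``first step'': you assert that a local Donnelly--Fefferman doubling inequality gives, for the rescaled eigenfunction $\tilde\psi$ with eigenvalue $\tilde\lambda=r^2\lambda$,
\[
\sup_{B_{2\tilde\delta}(y)}|\tilde\psi|^2 \;\leq\; e^{c\sqrt{\tilde\lambda}}\,\sup_{B_{\tilde\delta}(y)}|\tilde\psi|^2,
\]
with $c$ depending only on the local geometry. No such purely local statement exists. The local output of the Carleman (or frequency) machinery is a \emph{three-ball inequality}: in the formulation of \cite{DF1} used in the paper,
\[
\sup_{B_{2\tilde\delta}(y)}|\tilde\psi|^2 \;\leq\; e^{\kappa_1\sqrt{\tilde\lambda}}\left(\frac{\sup_{B_h(y)}|\tilde\psi|^2}{\sup_{B_{h/5}(y)\setminus B_{h/10}(y)}|\tilde\psi|^2}\right)^{\kappa_2}\sup_{B_{\tilde\delta}(y)}|\tilde\psi|^2,
\]
and the bracketed ratio must be controlled by additional information. (Already for $\tilde\lambda=0$, i.e.\ harmonic functions, $\mathrm{Re}(z^N)$ on a disk has doubling index $\asymp N$, so the clean bound cannot follow from curvature and injectivity alone.) In your write-up the hypothesis \eqref{L2assumption} is never invoked to obtain \eqref{L2DoublingEstimate}--\eqref{LinftyDoublingEstimate}; it appears only as the base case for the iteration. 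That is where the argument breaks.

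The paper fixes exactly this point: after rescaling and choosing $h=20$, it bounds the numerator $\sup_{B_{20r}(x)}|\psi_\lambda|^2\leq bK_2(r\sqrt\lambda)^n$ using the upper half of \eqref{L2assumption} together with an elementary elliptic $L^\infty$--$L^2$ estimate at the wavelength scale, and bounds the denominator from below by $aK_1$ by placing a ball $B_r(y)$ inside the annulus $B_{4r}(x)\setminus B_{2r}(x)$ and invoking the lower half of \eqref{L2assumption}. This yields a bracketed ratio $\lesssim (r\sqrt\lambda)^n$, which is absorbed into $e^{c\,r\sqrt\lambda}$ since $r\sqrt\lambda\geq 1$. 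Once \eqref{LinftyDoublingEstimate} is established this way, the paper derives \eqref{L2DoublingEstimate} from it (again using an elliptic estimate to compare $L^2$ and $L^\infty$), rather than obtaining it directly. Your iteration step for \eqref{L2LowerBound} and \eqref{LinftyLowerBound} is correct and coincides with the paper's.
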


\begin{proof} We will give two proofs for (\ref{LinftyDoublingEstimate}). All other statements will follow from this as we will show.  The first proof of (\ref{LinftyDoublingEstimate}) follows from a rescaling argument applied to the following theorem of Donnelly-Fefferman, which is a purely local result based on Carleman estimates. The second proof relies on a theorem of Mangoubi \cite{Ma}. 

\begin{theo} [Donnelly-Fefferman \cite{DF1}, Proposition 3.10.ii] Let $(\tilde X, \tilde g)$ be a smooth Riemannian manifold, $p \in \tilde X$ a fixed point, and $\tilde R >0$ a fixed radius such that the $\tilde g$-geodesic ball $\tilde B_{2 \tilde R}(p)$ is embedded. Let $\psi_{\tilde \lambda}$ be a smooth function such that for some $\tilde \lambda \geq 1$ we have $\Delta_{\tilde g} \psi_{\tilde \lambda} = \tilde \lambda \psi_{\tilde \lambda}$ on $\tilde B_{2 \tilde R}(p)$. Then there exists a suitably small $h_0(\tilde g) >0$ such that for all $ h \leq h_0(\tilde g), \delta < h/2$, and $x \in \tilde B_{\frac{\tilde R}{2}}(p)$:

\begin{equation}\label{DF-DoublingEstimate}
\sup _{\tilde B_{2 \delta }(x)} | \psi_{ \tilde \lambda}|^2 \leq e^{\kappa_1 \, \sqrt{\tilde \lambda}} \left ( \frac{\sup _{\tilde B_{ h }(x)} | \psi_{ \tilde \lambda}|^2 }{  \sup _{\tilde B_{h/5 }(x) \backslash \tilde B_{h/10}(x)} | \psi_{ \tilde \lambda}|^2 }    \right )^{\kappa_2} \sup_{ \tilde B_{\delta}(x)} | \psi_{ \tilde \lambda}|^2 .
\end{equation}
The constant $h_0(\tilde g)$ is controlled by $ \tilde R$ and  the reciprocal of the square root of $\sup_{\tilde B_{2 \tilde R}(p)} |\text{Sec}(\tilde g )|$, and the constants $\kappa_1$ and $\kappa_2$ are controlled by  $\sup_{ \tilde B_{2 \tilde R}(p)} |\text {Sec}(\tilde g )|$.
\end{theo}
To prove our lemma, we define $(\tilde X, \tilde g)= ( X, \frac{1}{r^2}g)$, and $\tilde R = \frac{1}{r} R$. Then the equation  $$\Delta_g \psi_\lambda = \lambda \psi_\lambda \qquad  \text{on} \quad B_{2R}(p),$$ becomes 
$$\Delta_{\tilde g} \psi_{\tilde \lambda} = \tilde \lambda \psi_{\tilde \lambda} \qquad \text{on} \quad \tilde B_{2 \tilde R}(p),$$ with $$\tilde \lambda = r^2\lambda  \qquad \text{and} \qquad \psi_{\tilde \lambda} =\psi_\lambda.$$  We then note that by \cite{DF1}, although not explicitly stated, we have
$$h_0(\tilde g) = {C} \min \Big ( \tilde R/2, (\sup_{ \tilde B_{2 R}(p)} |\text {Sec}( \tilde g )|)^{-1/2} \Big) =\frac{C}{r} \min \Big (R/2, (\sup_{ B_{2 R}(p)} |\text {Sec}(g )|)^{-1/2} \Big ), $$ for some suitably small $C$ that is uniform in $r$. Hence if we set $$r_0(g) \leq  \frac{C}{20}\min \Big( R/2, (\sup_{ B_{2 R}(p)} |\text {Sec}(g )|)^{-1/2} \Big )$$ then for all $r \leq r_0(g)$ we have $h_0(\tilde g) \geq 20$, and therefore we can choose $h=20$.  As a result, by Theorem \ref{DF-DoublingEstimate}
$$
\small{\delta \in (0, 10), x \in \tilde B_{\frac{\tilde R}{2}}(p):} \quad \sup _{ \tilde B_{2 \delta }(x)} | \psi_{ \tilde \lambda}|^2 \leq e^{\kappa_1 \, \sqrt{\tilde \lambda}} \left ( \frac{\sup _{\tilde B_{ 20 }(x)} | \psi_{ \tilde \lambda}|^2 }{  \sup _{\tilde B_{4 }(x) \backslash \tilde B_{2}(x)} | \psi_{ \tilde \lambda}|^2 }    \right )^{\kappa_2} \sup_{\tilde B_{\delta}(x)} | \psi_{ \tilde \lambda}|^2 .
$$
Writing this inequality with respect to the metric $g$ we get 
\begin{equation} \label{Doubling}
\small {\delta \in (0, 10r), x \in B_{\frac{R}{2}}(p):}  \quad \sup _{B_{2 \delta }(x)} | \psi_{\lambda}|^2 \leq e^{\kappa_1 \, r \sqrt{\lambda}} \left ( \frac{\sup _{B_{ 20 r }(x)} | \psi_{ \lambda}|^2 }{  \sup _{B_{4 r}(x) \backslash B_{2r}(x)} | \psi_{\lambda}|^2 }    \right )^{\kappa_2} \sup_{B_{\delta}(x)} | \psi_{\lambda}|^2 .
\end{equation} 

\begin{rema}We emphasize that since $|\text{Sec}( \tilde g)| = r^2 |\text{Sec}(g)|$, and since $r$ is bounded by $r_0(g)$, the constants $\kappa_1$ and $\kappa_2$ can be chosen independently from $r$.  \end{rema}

We now bound the expression in parenthesis using our local $L^2$ assumptions (\ref{L2assumption}).  First we find $y$ such that $$B_r(y) \subset B_{4 r}(x) \backslash B_{2r}(x).$$ Since by assumption $\int _{B_{r}(y)} | \psi_{j}|^2 \geq K_1 r^n$, we must have $$\sup _{B_{4 r}(x) \backslash B_{2r}(x)} | \psi_{ \lambda}|^2 \geq \sup_{B_r(y)} | \psi_{ \lambda}|^2 \geq \frac{r^n}{\text{Vol}(B_r(y))} K_1. $$  By making $r_0(g)$  sufficiently smaller, we obtain that for any $r \leq  r_0(g)$ which satisfies (\ref{L2assumption}),  we have
\begin{equation}\label{Denominator} \sup _{B_{4 r}(x) \backslash B_{2r}(x)} | \psi_{ \lambda}|^2  \geq a K_1,\end{equation}
for some constant $a$ which is uniform in $x \in B_{R/2}(p)$, $r \in (0, r_0(g))$, and $\lambda$.
For the numerator in the parenthesis we claim that \footnote{In fact when $(X, g)$ is a \textit{closed manifold} the better estimate $ b K_2 (r \sqrt{\lambda})^{n-1}$ holds using Sogge's local $L^\infty$ estimates \cite{So15}, but we do not need this better estimate.}
\begin{equation} \label{LocalLinfty} \sup _{B_{ 20 r }(x)} | \psi_{ \lambda}|^2 \leq b K_2 (r \sqrt{\lambda})^n, \end{equation} for some constant $b$ which is uniform in $x \in B_{R/2}(p)$, $r \in (0, r_0(g))$ and $\lambda$. To prove (\ref{LocalLinfty}) we cover $B_{20r}(x)$ using balls of radius $\frac{r}{2}$. It is therefore enough to show that 
\begin{equation} \label{GT} \sup _{B_{r/2}(y)} | \psi_{ \lambda}|^2 \leq b \lambda^{n/2} \sup _{z \in B_{r/2}(y)} \int_{B_r(z)} | \psi_{ \lambda}|^2, \end{equation} for some $b$ that is uniform in $y$, $r$, and $\lambda$.   This estimate however follows from standard elliptic estimates (see for example \cite{GT}, Theorem 8.17 and Corollary 9.21) which asserts that there exists $a_0 <1$ suitably small such that for  $z \in B_R(p)$ we have
\begin{equation} \label{EllipticEstimate} \forall s \in(0,  a_0 \lambda^{-\frac{1}{2}}]: \quad \sup _{B_{ s/2}(z)} | \psi_{ \lambda}|^2 \leq b_0 s^{-n} \int_{B_{s}(z)} | \psi_{ \lambda}|^2, \end{equation} for some $b_0$ which is uniform in $\lambda$, $z$, and $s$. Since $ \lambda^{-1/2} \leq r$, we have $B_{a_0 \lambda^{-\frac{1}{2}}}(z) \subset B_{r}(z)$ and hence to get (\ref{GT}) we just need to observe that 
$$ \sup _{B_{r/2}(y)} | \psi_{ \lambda}|^2 \leq \sup_{z \in B_{r/2}(y)} \sup _{B_{ \frac{a_0}{2}\lambda^{-\frac{1}{2}}}(z)} | \psi_{ \lambda}|^2  \leq b \lambda^{n/2} \sup_{z \in B_{r/2}(y)} \int_{B_r(z)} | \psi_{ \lambda}|^2, $$with $b= b_0a_0^{-n} $.
Now we apply (\ref{Denominator}) and (\ref{LocalLinfty}) to (\ref{Doubling}) to achieve 
$$\delta \in (0, 10r), x \in B_{\frac{R}{2}}(p): \quad \sup _{B_{2 \delta }(x)} | \psi_{\lambda}|^2 \leq d e^{\kappa_1 \, r \sqrt{\lambda}} (r \sqrt{\lambda})^{n\kappa_2} \sup_{B_{\delta}(x)} | \psi_{\lambda}|^2 . $$ for some uniform constant $d$ which depends on $K_1$ and $K_2$. We note since $r \sqrt{\lambda} \geq 1$,  if we choose $M$ to be an integer larger than $ \kappa_1$ and $n \kappa_2$ then 
$$ (r \sqrt{\lambda})^{n \kappa_2}e^{\kappa_1 \, r \sqrt{\lambda}}\leq M! e^{2M r \sqrt{\lambda}}.$$ Finally by choosing  $$c \geq \max (\log d, M \log M, 2M),$$ we get (\ref{LinftyDoublingEstimate}).

To prove (\ref{L2DoublingEstimate}) we use (\ref{LinftyDoublingEstimate}). It is enough to show that 
$$ \frac{\int _{B_{2 \delta }(x)} | \psi_{\lambda}|^2}{\int _{B_{\delta }(x)} | \psi_{\lambda}|^2}    \leq  K (\delta \sqrt{\lambda})^n \frac{\sup _{B_{2 \delta }(x)} | \psi_{\lambda}|^2}{\sup _{B_{ \delta/2 }(x)} | \psi_{\lambda}|^2},$$ because $(\delta \sqrt{\lambda})^n \leq (10r \sqrt{\lambda})^n \leq e^{c r \sqrt{\lambda}}$ for some appropriate $c$ as we found in the above argument. The above ratios comparison follows from the trivial estimate
$$\int _{B_{2 \delta }(x)} | \psi_{\lambda}|^2 \leq \frac{1}{a} \, (2\delta) ^n \sup _{B_{2 \delta }(x)} | \psi_{\lambda}|^2,$$ applied to the numerator, and the estimate 
$$ \int _{B_{ \delta }(x)} | \psi_{\lambda}|^2 \geq \frac{1}{b_0} \, (\min ( \lambda^{-1/2}, \delta /4 ))^{n} \sup _{B_{ \delta/2 }(x)} | \psi_{\lambda}|^2, $$ applied to the denominator. The last estimate follows from the elliptic estimate (\ref{EllipticEstimate}) by setting $s= \min (a_0 \lambda^{-1/2}, \delta /4 )$ and writing 
$$ \sup _{B_{\delta/2}(x)} | \psi_{ \lambda}|^2 \leq \sup_{z \in B_{\delta/2}(x)} \sup _{B_{s/2}(z)} | \psi_{ \lambda}|^2  \leq b_0 s^{-n} \sup_{z \in B_{\delta /2}(x)} \int_{B_{s}(z)} | \psi_{ \lambda}|^2 \leq b_0 s^{-n} \ \int_{B_{\delta}(x)} | \psi_{ \lambda}|^2.$$ 

The proofs of (\ref{L2LowerBound}) and (\ref{LinftyLowerBound}) are obtained by iterations of inequalities (\ref{L2DoublingEstimate}) and (\ref{LinftyDoublingEstimate}). Since they are very similar we only give the proof of (\ref{LinftyLowerBound}). Fix $ \delta \leq r/2$ and let $m$ be the greatest integer such that $2^{m-1} \delta \leq r$. Then if we write inequalities (\ref{LinftyDoublingEstimate}) for $\delta, 2 \delta, 4 \delta, \dots 2^{m-1} \delta$ and multiply them all we get 
$$ \sup_{B_\delta(x)} |\psi_\lambda|^2 \geq e^{-mcr \sqrt{ \lambda}}
 \sup_{B_{2^{m}\delta}(x)} |\psi_\lambda|^2.$$ Because of the choice of $m$, we have $ 2^{m} \delta>r$. Hence 
$$ \sup_{B_\delta(x)} |\psi_\lambda|^2 \geq e^{-mcr \sqrt{ \lambda}}
 \sup_{B_r(x)} |\psi_\lambda|^2 \geq \frac{e^{-mcr \sqrt{ \lambda}}}{\text{Vol}(B_r(x))}\int_{B_r(x)} |\psi_\lambda|^2 \geq a K_2 e^{-mcr \sqrt{ \lambda}}. $$  Since $m \geq \log (r/ \delta)$ and $r \sqrt{ \lambda} \geq 1$, by selecting $c$ slightly larger  the lower bound (\ref{LinftyLowerBound}) follows.

 \end{proof}

\subsection{Second proof of improved  $L^\infty$-growth estimates (\ref{LinftyDoublingEstimate})} We recall the following result of \cite{Ma}, which is similar to estimate (\ref{Doubling}).  
\begin{theo}[Mangoubi, \cite{Ma} Theorem 3.2]
Let $(X, g)$ be a smooth Riemannian manifold, $p \in X$, and $R>0$ so that the geodesic ball $B_{2R}(p)$ is embedded, and denote $S=\sup_{B_{2R(p)}} | \text{Sec}(g)|$. Suppose $\psi_\lambda$ is a smooth function such that $\Delta_g \psi_\lambda = \lambda \psi_\lambda$ on $B_{2R}(p)$ for some $\lambda \geq 0$. Then for all 
$ \delta  \leq s \leq \min (C S^{-1/2}, R/6)$, and all $x \in B_{R/2}(p)$
$$\sup _{B_{3 \delta }(x)} | \psi_{\lambda}|^2 \leq c_0 e^{c_1 \, s \sqrt{\lambda}} \left ( \frac{\sup _{B_{ 3 s }(x)} | \psi_{ \lambda}|^2 }{  \sup _{B_{s}(x)} | \psi_{\lambda}|^2 }    \right )^{1+c_2 \delta^2 S} \sup_{B_{2\delta}(x)} | \psi_{\lambda}|^2, $$ where $C$, $c_1$ and $c_2$ are positive constants which depend only on $R$, and $c_0$ depends on bounds on $(g^{-1})_{ij}$, its derivatives and its ellipticity constant on the ball $B_{2R}(p)$. 
\end{theo}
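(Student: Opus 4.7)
My plan is to reduce the eigenfunction statement to a Hadamard-type three-balls inequality for \emph{harmonic functions} on a Riemannian manifold of bounded sectional curvature, via the Donnelly--Fefferman lifting trick. First I would observe that $u(x,t) := \psi_\lambda(x)\cosh(\sqrt{\lambda}\,t)$ is harmonic on the product $(X\times\R,\, g+dt^2)$, since $\Delta_{g+dt^2}u = \cosh(\sqrt{\lambda}t)\bigl(\Delta_g\psi_\lambda - \lambda\psi_\lambda\bigr) = 0$. The product metric has the same sectional curvatures as $g$, so $|\text{Sec}| \leq S$ is preserved in a neighborhood of $(x,0)$, and for $\rho\leq s$ one has the comparison
$$ \sup_{B_\rho(x)}|\psi_\lambda| \;\leq\; \sup_{B_\rho(x)\times[-\rho,\rho]}|u| \;\leq\; \cosh(\sqrt{\lambda}\rho)\sup_{B_\rho(x)}|\psi_\lambda|. $$
Everything will then be reduced to proving a three-balls inequality for the harmonic $u$ with a curvature-corrected exponent.

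\textbf{Almost-monotone frequency.} Following Garofalo--Lin, I would introduce the Almgren frequency function
$$N(r):=\frac{rD(r)}{H(r)},\qquad H(r)=\int_{\partial\widetilde B_r}u^2\,dA,\qquad D(r)=\int_{\widetilde B_r}|\nabla u|^2\,dV,$$
where $\widetilde B_r$ is the geodesic ball around $(x,0)$ in $X\times\R$. Using harmonicity and the Bochner identity, together with the fact that in geodesic polar coordinates the volume and area elements agree with their Euclidean counterparts up to a factor $1+O(r^2 S)$, one derives
$$\frac{d}{dr}\log N(r)\;\geq\;-C\,r\,S,$$
so the function $r\mapsto e^{Cr^2 S}N(r)$ is monotone non-decreasing on $[0,\min(CS^{-1/2},R/6)]$. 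This is the quantitative form of the fact that in flat space $N(r)$ is non-decreasing for harmonic $u$.

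\textbf{From frequency to three balls and back to $\psi_\lambda$.} Using $\frac{d}{dr}\log H(r) = 2N(r)/r$ together with the almost-monotonicity above, I would integrate between scales $\delta$ and $s$ to obtain a doubling inequality of the form $\log\bigl(M(3\delta)/M(2\delta)\bigr)\leq(1+c_2\delta^2 S)\log\bigl(M(3s)/M(s)\bigr)+O(1)$, where $M(r)=\sup_{\widetilde B_r}|u|$; the small-scale curvature correction $e^{c_2\delta^2 S}$ in the frequency is exactly what produces the power $(1+c_2\delta^2 S)$ appearing in the theorem. Unlifting, $\sup_{\widetilde B_r}|u|$ is comparable to $\cosh(\sqrt{\lambda}\,r)\sup_{B_r(x)}|\psi_\lambda|$, so the hyperbolic factors combine into $\cosh(\sqrt{\lambda}\,3s)/\cosh(\sqrt{\lambda}\,s)\leq e^{c_1 s\sqrt{\lambda}}$, producing the exponential prefactor of the statement. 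The remaining constant $c_0$ absorbs the conversion from $L^2$ spherical means to sup-norms via standard interior elliptic estimates, which in turn explains its dependence on $(g^{-1})_{ij}$, its derivatives, and the ellipticity constant on $B_{2R}(p)$.

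\textbf{Main obstacle.} The most delicate part will be the quantitative almost-monotonicity of $N(r)$ with the sharp curvature error $O(rS)$. This demands Taylor-expanding the metric in geodesic normal coordinates around $(x,0)$ to second order, carefully tracking curvature cross-terms that appear when one differentiates $H(r)$ and $D(r)$ and then integrates by parts using $\Delta u = 0$, and checking that all tangential corrections collect into the single factor $e^{Cr^2 S}$ rather than something worse. The restriction $s\leq CS^{-1/2}$ is built in precisely so that $s^2 S\lesssim 1$, keeping the geodesic polar chart quantitatively bi-Lipschitz to Euclidean coordinates and ensuring that $c_2\delta^2 S$ remains a small correction to the exponent rather than an unbounded one.
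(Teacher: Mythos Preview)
This theorem is not proved in the present paper: it is quoted verbatim from Mangoubi \cite{Ma} and used as a black box in the second proof of the refined doubling estimate \eqref{LinftyDoublingEstimate}. There is therefore no proof in the paper to compare your proposal against.

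That said, your outline is essentially the route Mangoubi himself takes. The core of \cite{Ma} is precisely a curvature-quantified almost-monotonicity of the Almgren frequency, yielding a three-balls inequality for harmonic functions with the correction $(1+c_2\delta^2 S)$ in the exponent, and the passage to eigenfunctions is done via the harmonic lift to $X\times\R$. Your identification of the main obstacle is accurate: the work lies in expanding the metric in geodesic normal coordinates and showing that the error in $\frac{d}{dr}\log N(r)$ is $O(rS)$ rather than something worse, which is exactly what forces the restriction $s\leq CS^{-1/2}$. One small point: Mangoubi actually works with an $L^2$ frequency and converts to sup-norms at the end, as you anticipate in your remark about $c_0$; you should also make sure that the sectional curvatures of the product $g+dt^2$ are bounded by $S$ (they are, since the extra planes containing $\partial_t$ are flat), so that the frequency argument on the lifted ball inherits the same curvature constant.
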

Using this theorem twice, we get for $\delta \leq s \leq \min (C S^{-1/2}, R/6)$
\begin{align*} 
\frac {\sup _{B_{2 \delta }(x)} | \psi_{\lambda}|^2}{\sup_{B_{\delta}(x)} | \psi_{\lambda}|^2 }& \leq  
 \frac {\sup _{B_{\frac{9}{4} \delta }(x)} | \psi_{\lambda}|^2}{\sup_{B_{\frac{3}{2}\delta}(x)} | \psi_{\lambda}|^2 } \frac {\sup _{B_{\frac{3}{2} \delta }(x)} | \psi_{\lambda}|^2}{\sup_{B_{\delta}(x)} | \psi_{\lambda}|^2 }\\ 
& \leq c_0^2 e^{2c_1 \, s \sqrt{\lambda}} \left ( \frac{\sup _{B_{ 3 s }(x)} | \psi_{ \lambda}|^2 }{  \sup _{B_{s}(x)} | \psi_{\lambda}|^2 }    \right )^{2+c'_2\delta^2 S}, \end{align*} 

for a new constant $c'_2$. Now we choose $r_0(g) \leq \frac{1}{10}\min (C S^{-1/2}, R/6),$ we  put $s=10r$, and argue as we did following inequality (\ref{Doubling}).  

\subsection{Proof of (\ref{RefinedOV}); upper bound on the order of vanishing} Let us show that the upper bound (\ref{RefinedOV}) on the order of vanishing $ \nu_x(\psi_\lambda)$ follows from the lower bound (\ref{LinftyLowerBound}). Suppose $\psi_\lambda$ vanishes at $x$ to order $M$. Then there exists $\delta_0>0$ such that for all $\delta < \delta_0$
$$ C_{\psi_\lambda, \delta_0} \delta^M \geq \sup_{B_{\delta}(x)} |\psi_\lambda|^2.$$ Therefore using (\ref{LinftyLowerBound}), for all $ 0<\delta < \min(\delta_0, r/2)$
$$ C_{\psi_\lambda, \delta_0} \delta^M \geq \Big (\frac{\delta}{r} \Big )^{c r \sqrt{\lambda}}. $$ Dividing by $\delta^M$ and letting $ \delta \to 0$ we see that we must have $ M \leq c r\sqrt{\lambda}$. 

\subsection{Proof of (\ref{RefinedUB}); upper bounds on the size of nodal sets for $n \geq 3$.}
The main tool is the following result of \cite{La}. 
\begin{theo}[Logunov \cite{La}, Theorem 6.1] Let $(\tilde X, \tilde g)$ be a smooth Riemannian manifold of dimension $d$, $\tilde p \in \tilde X$, and $\tilde R>0$ so that the geodesic ball $B_{2\tilde R}(\tilde p)$ is embedded. Suppose $H$ is a harmonic function on $B_{2\tilde R}(\tilde p)$; that is $\Delta_{\tilde g} H = 0$ on $B_{2\tilde R}(\tilde p)$.  Then there exists $R_0=R_0 (B_{2 \tilde R}(\tilde p), g)< \tilde R$ such that for any Euclidean \footnote{It means that $\tilde Q$ is a cube in the chart associated to the geodesic normal coordinates at $\tilde p$. } cube $\tilde Q \subset B_{ R_0}( \tilde p)$ one has
$$ \mathcal H^{d-1} \Big(\{H=0 \} \cap \tilde Q \Big )  \leq \kappa \, \text{diam}(\tilde Q)^{d-1} N(H,\tilde Q)^{2\alpha}, $$ for some $\alpha > \frac{1}{2}$ that is only dependent on $d$, and some $\kappa$ that depends only on $(B_{2 \tilde \tilde R}(\tilde p), g)$. Here, 
$$ N(H, \tilde Q) = \sup_{B^e_{s}(x) \subset \tilde Q} \log \left ( \frac{\sup_{B^e_{2s}(x)}|H|^2}{\sup _{B^e_{s}(x)}|H|^2 } \right ), $$
where $B^e_s(x)$ stands for the Euclidean ball of radius $s$ centered at $x$ in the normal chart of $B_{\tilde R}( \tilde p)$.   
\end{theo}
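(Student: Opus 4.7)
The plan is to establish the polynomial bound $\mathcal H^{d-1}(\{H=0\}\cap \tilde Q) \leq \kappa \,\text{diam}(\tilde Q)^{d-1} N(H,\tilde Q)^{2\alpha}$ by a recursive subdivision of the cube combined with a combinatorial control on how doubling indices distribute among sub-cubes. After rescaling so that $\text{diam}(\tilde Q)=1$ and passing to normal coordinates at $\tilde p$, I may treat $\tilde g$ as Euclidean up to bounded multiplicative constants by choosing $R_0$ small enough relative to curvature bounds on $B_{2\tilde R}(\tilde p)$. The standing notation is: subdivide a cube $Q$ into $A^d$ congruent sub-cubes of side $1/A$, and compare the cube-based doubling index $N(H,Q') := \log(\sup_{2Q'}|H|/\sup_{Q'}|H|)$ on each sub-cube to that of the parent; this differs from the ball-based $N(H,Q)$ in the statement only by a dimension-dependent constant.

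The decisive input, which is the real substance of the proof, is what I will call a \emph{hyperplane lemma} for harmonic functions: for any coordinate-aligned slab of $A$ sub-cubes inside $Q$, at most a dimension-dependent number $C(d)$ of them can satisfy $N(H,Q') \geq N(H,Q)/2$. The intuition is that too many aligned sub-cubes carrying near-maximal doubling index would, after rescaling and passing to a harmonic limit, contradict the growth allowed by $N(H,Q)$; the proof uses a three-spheres / Harnack inequality along the slab together with a normal-families compactness argument. Granting this lemma, the number of ``heavy'' sub-cubes produced at each subdivision step is at most $d \cdot A^{d-1} C(d)$ rather than the trivial $A^d$, while every other sub-cube has its doubling index reduced by a factor of at least $2$.

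Iterate the subdivision $K$ times, producing a tree of cubes. On each leaf cube one of two things occurs: either the doubling index has dropped below a universal constant $B_0$, in which case the classical Hardt-Simon / Donnelly-Fefferman estimate in a unit cube gives $\mathcal H^{d-1}(\{H=0\} \cap q) \leq C(B_0) \,(\text{side})^{d-1}$; or the cube lies on a ``heavy'' branch, of which there are at most $(d \, C(d))^K$ along each hyperplane direction. Summing the $(d-1)$-Hausdorff measures of $\{H=0\}$ over all leaves of side $A^{-K}$, and optimizing $K$ and $A$ as functions of $N = N(H,\tilde Q)$ so that the heavy branch stops contributing precisely when $N \cdot 2^{-K} \leq B_0$, yields the advertised exponent $2\alpha > 1$ where $\alpha = \alpha(d)$ is fixed by the ratio $\log(dC(d) A^{d-1})/\log(2 A^d)$, arranged to be strictly less than $1$.

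The hard part will be the hyperplane lemma. Every other step — metric reduction, cube-versus-ball doubling comparisons, the baseline Hardt-Simon bound on light cubes, and the final summation — is a routine manipulation of the frequency function and of standard harmonic-function estimates. But the quantitative assertion that only boundedly many sub-cubes in a slab can inherit half of the parent doubling index is nonperturbative: it forces one to combine monotonicity of the doubling index with an Almgren-type frequency argument adapted to the anisotropic geometry of slabs, and it is precisely this combinatorial saving that pushes the exponent below $1$ while preventing it from reaching the conjectural value $2\alpha = 1$. Without this lemma one only recovers the trivial exponential bound of Hardt-Simon.
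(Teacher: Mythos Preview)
The paper does not prove this theorem. It is quoted verbatim as a black-box input from Logunov \cite{La} (Theorem 6.1) and then \emph{applied}, together with the refined doubling estimate of Lemma~\ref{RefinedDoublingEstimate}, to obtain the nodal upper bound (\ref{RefinedUB}). There is therefore nothing in this paper to compare your proof against: the author's entire ``proof'' of this statement is the citation.

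That said, what you have written is a reasonable high-level sketch of Logunov's own argument in \cite{La}. The recursive cube subdivision, the combinatorial saving coming from a bound on how many sub-cubes in a tunnel can carry doubling index comparable to the parent, and the terminal use of a classical estimate on low-index cubes are all genuine ingredients. Two cautions if you intend to flesh this out: first, Logunov's actual mechanism combines a \emph{simplex lemma} with a hyperplane-type propagation-of-smallness argument, and the precise statement is not quite ``at most $C(d)$ sub-cubes in a slab have $N \geq N(\text{parent})/2$'' but rather a more delicate dichotomy involving both the number of bad sub-cubes and a quantitative drop on the good ones; your phrasing of the hyperplane lemma is heuristically right but would not survive a literal attempt to make the counting work. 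Second, the compactness/normal-families route you suggest for the hyperplane lemma is not how Logunov proceeds; his argument is effective and relies on quantitative three-ball inequalities and the monotonicity of the frequency function, not on a limiting procedure. So your outline captures the architecture but misstates the load-bearing lemma.
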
 

To prove (\ref{RefinedUB}), we use our modified growth estimates (\ref{L2DoublingEstimate}) and the above theorem. We first cover $(X, g)$ using geodesic balls $\{B_{r}(x_i) \}_ {x_i \in \mathcal I}$ such that each point in $X$ is contained in $C(X, g)$ many of the double balls $\{B_{2r}(x_i) \}_ {x_i \in \mathcal I}$, where $C(X,g)$ is independent of $r$ and depends only on the injectivity radius of $(X, g)$ and a bound on the Ricci curvature of $(X, g)$. Such a thing is possible by Bishop-Gromov volume comparison theorem. For a proof see for example \cite{CM} Lemma 2. It is then easy to see that such a covering has at most $C_0 r^{-n}$ open balls for some uniform constant $C_0=C_0(X,g)$. Next we estimate $\mathcal H^{n-1}(Z_{\psi_\lambda} \cap B_{r}(p))$ for each $p\in \mathcal I$. To do this we define
$$ \tilde X =X \times \mathbb R, \;  d=n+1, \; \tilde g=\text{product metric},  \;\tilde p =(p, 0),  \;\tilde Q_r (\tilde p) =  Q_r (p) \times [-r, r], $$ where $Q_r(p)$ is the Euclidean cube of sidelengths $2r$ centered at $p$. We shall also use $ \tilde x = (x, t)$. We then put
$$ H(\tilde x)= \psi_\lambda (x) e^{ t\sqrt{\lambda}}.$$ Then clearly $\Delta_ {\tilde g} H =0$. We also observe that 
$$ B_r(p) \subset Q_r(p)\quad \text{and} \quad  \tilde Q _r (\tilde p) \subset B_{R_0} ( \tilde p).$$ We can in fact choose $R_0$ to be independent of $ \tilde p$  and $ \tilde R$ because $R_0$  in the above theorem is non-increasing in the sense that $R_0 (B_{2 \tilde R_1}(\tilde p_1), g) \geq R_0 (B_{2 \tilde R_2}(\tilde p_2), g)$ whenever $B_{2 \tilde R_1}(\tilde p_1) \subset B_{2 \tilde R_2}(\tilde p_2)$. Then a uniform $R_0$ can be chosen by means of the \textit{Lebesgue number lemma} and the compactness of $X \times [-1, 1]$. We also need to make sure that $r < \frac{R_0}{2}$. This can be made possible by choosing $r_0(g)$ in Theorem \ref{UpperBound} smaller if necessary. We then write 
\begin{align*} \mathcal H^{n-1}(\{ \psi_\lambda =0 \} \cap B_{r}(p)) & \leq \mathcal H^{n-1}(\{ \psi_\lambda =0 \} \cap Q_r(p)) \\ &= \frac{1}{2r} \mathcal H^{n}(\{ H =0 \} \cap \tilde Q_r (\tilde p)) \\
& \leq \frac{\kappa}{2r} (2r)^n N(H, \tilde Q _r(\tilde p))^{2\alpha} \\
& = \kappa ' r^{n-1} N(H, \tilde Q _r(\tilde p))^{2\alpha}\end{align*}
Now we use our doubling estimates to show that $N(H, \tilde Q _r(\tilde p)) \leq c' r \sqrt{\lambda}$ for some $c '$ that is uniform in $r$, $\lambda$, and $p$. We underline that our doubling estimates involve geodesic balls, but the definition of the doubling index $N$ in \cite{La} uses Euclidean balls $B^e_s( \tilde x)$ in a fixed normal chart of $B_{2 \tilde R}( \tilde p)$. However, by choosing $R_0$ sufficiently small we can make sure that 
$$ B_{s/2}( \tilde x) \subset B^e_s( \tilde x) \subset B_{3s/2}( \tilde x)$$ for all $\tilde x \in B_{R_0}( \tilde p)$ and all $s < R_0$. As a result of this if we assume $ r <\frac{R_0}{10}$, then using (\ref{LinftyDoublingEstimate}) four times we get
\begin{align*} N(H, \tilde Q_r(\tilde p) ) & = \sup_{B^e_{s}( \tilde x) \subset \tilde Q_r (\tilde p)} \log \left ( \frac{\sup_{B^e_{2s}( \tilde x)}|H(\tilde x)|^2}{\sup _{B^e_{s}(\tilde x)}|H(\tilde x)|^2 } \right )\\ & \leq \sup_{B_{s/2}(\tilde x) \subset \tilde Q_r (\tilde p)} \log \left ( \frac{\sup_{B_{3s}(\tilde x)}|H(\tilde x)|^2}{\sup _{B_{s/2}(\tilde x)}|H(\tilde x)|^2 } \right ) \\
& \leq \sup_{B_{s/2}(x) \subset Q_{r}(p)} \log \left (e^{5s \sqrt{\lambda}} \frac{\sup_{B_{3s}(x)}|\psi_\lambda(x)|^2}{\sup _{B_{s/4}(x)}|\psi_\lambda(x)|^2 } \right ) \\
& \leq c' r \sqrt{\lambda}. \end{align*}
Finally
\begin{align*} \mathcal H^{n-1} ( Z_{\psi_\lambda} )  \leq \sum_{ x_i \in \mathcal I} \mathcal  H^{n-1}(Z_{\psi_\lambda} \cap B_{r}(x_i)) & \leq  C_0 r^{-n} \kappa' r^{n-1} (c'^2r^2 \lambda)^\alpha  \leq c_1 r^{2\alpha -1} \lambda^ \alpha , \end{align*} for some $c_1$ that is uniform in $r$ and $\lambda$.

\subsection{Proof of (\ref{RefinedUB2}); Upper bounds on the size of nodal sets for surfaces}
The main tool is the following local result of \cite{LM}.

\begin{theo}[Logunov-Malinnikova \cite{LM}] \label{LM} 
Let $(\tilde X, \tilde g)$ be a smooth Riemannian manifold of dimension $n=2$, $p \in \tilde X$ a point, and $\tilde R >0$ a radius such that the $\tilde g$-geodesic ball $\tilde B_{2 \tilde R}(p)$ is embedded. Let $\psi_{\tilde \lambda}$ be a smooth function such that for some $\tilde \lambda \geq 1$ we have $\Delta_{\tilde g} \psi_{\tilde \lambda} = \tilde \lambda \psi_{\tilde \lambda}$ on $\tilde B_{2 \tilde R}(p)$.  Suppose we also know that there exists some $s_0 \leq \frac{R}{10}$ such that for all $s < s_0$ we have
$$ \frac{\sup_{\tilde B_{2 s} (x)} | \psi_{\tilde \lambda} |^2}{\sup_{ \tilde B_{ s} (x)} | \psi_{\tilde \lambda} |^2} \leq C_1 e^{ c \sqrt{\tilde \lambda}},$$ for some constants $c$ and $C_1$ that are uniform for $x \in \tilde B_{\tilde R} (p)$. Then
\begin{equation} \mathcal H_{\tilde g} ^1 \Big ( \{ \psi_{\tilde \lambda} =0 \} \cap \tilde B_{\tilde R/2}(p) \Big ) \leq C_2 {\tilde \lambda} ^{\frac{3}{4} -\beta },\end{equation}
where $\beta \in (0, \frac{1}{4})$ is a small universal constant and $C_2$ is controlled by $c$, $C_1$, and the $\mathcal C^k$ norm of $ (\tilde g^{-1})_{ij}$ on $\tilde B_{2 \tilde R} (p)$ for some universal $k$.  
\end{theo}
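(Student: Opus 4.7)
The plan is to follow the Logunov–Malinnikova strategy, whose starting point is to reformulate everything in terms of a harmonic function on a three-dimensional domain. Concretely, work in a geodesic normal chart around $p$, view $(\tilde B_{2\tilde R}(p),\tilde g)$ as a domain in $\mathbb R^{2}$ with a smooth metric tensor whose $\mathcal C^{k}$ norm is controlled, and lift the eigenfunction to a harmonic function
$$H(x,t)=\psi_{\tilde\lambda}(x)\,\frac{\sinh(t\sqrt{\tilde\lambda})}{\sqrt{\tilde\lambda}}$$
on the product $\tilde B_{2\tilde R}(p)\times(-1,1)$ equipped with the product metric; then $\Delta_{\tilde g\oplus dt^{2}}H=0$. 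Under this lift, the hypothesis
$\sup_{\tilde B_{2s}(x)}|\psi_{\tilde\lambda}|^{2}\leq C_{1}e^{c\sqrt{\tilde\lambda}}\sup_{\tilde B_{s}(x)}|\psi_{\tilde\lambda}|^{2}$ on all balls of radius $s<s_{0}$ translates (by controlling the $t$-factor for $|t|\lesssim s$) into a uniform doubling bound for $H$ of the form $N(H,B_{s})\leq C\sqrt{\tilde\lambda}\,s$ on product balls in the lifted three-dimensional domain.

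Next, I would bound $\mathcal H^{1}(\{\psi_{\tilde\lambda}=0\}\cap\tilde B_{\tilde R/2}(p))$ cube by cube on a dyadic decomposition. Cover $\tilde B_{\tilde R/2}(p)$ by Euclidean squares $Q$ of side-length of order $\tilde\lambda^{-1/2}$, and for each $Q$ use the Hardt–Simon–Donnelly–Fefferman style estimate
$\mathcal H^{1}(\{H=0\}\cap Q\times[-\mathrm{diam}\,Q,\mathrm{diam}\,Q])\lesssim\mathrm{diam}(Q)\cdot(1+N(H,Q))$
to reduce the nodal length in $Q$ to the doubling index of $H$ on (the lifted cube over) $Q$. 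Summing the trivial upper bound $N(H,Q)\lesssim\sqrt{\tilde\lambda}\,\mathrm{diam}(Q)\lesssim 1$ over the $\asymp\tilde\lambda$ cubes and then the $\asymp\tilde\lambda^{1/2}$ cost of rescaling from unit-frequency to wavelength-scale reproduces the Donnelly–Fefferman bound $\tilde\lambda^{3/4}$; the improvement $\tilde\lambda^{3/4-\beta}$ must therefore come from showing that the cubes whose doubling index is close to the maximum are rare.

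The heart of the argument is the Logunov–Malinnikova combinatorial lemma for the doubling index. I would prove, by iterating a three-ball / propagation-of-smallness inequality for harmonic functions in three dimensions, a quantitative statement of the form: if a cube $Q$ is subdivided into $A^{3}$ equal subcubes (for a suitably large universal $A$) and $N(H,Q)=N$, then the number of subcubes $q\subset Q$ with $N(H,q)\geq N/(1+\eta)$ is at most $A^{3-\gamma}$ for some universal $\gamma>0$. The key analytic input is that if too many subcubes had nearly maximal doubling index, the three-ball inequality would force $H$ to be small on an overly large portion of $Q$, contradicting the definition of $N(H,Q)$. Iterating this sparsity across $\asymp\log\tilde\lambda$ dyadic scales converts the deficit $\gamma$ into a quantitative improvement $\beta=\beta(\gamma)>0$ in the exponent.

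Finally I would combine the pieces: on each wavelength-scale cube $Q_{i}$ use the Hardt–Simon bound to get $\mathcal H^{1}(\{\psi_{\tilde\lambda}=0\}\cap Q_{i})\lesssim\tilde\lambda^{-1/2}(1+N(H,\tilde Q_{i}))$, and then sum using the sparsity lemma instead of the trivial count. Because only $\tilde\lambda^{1-\beta'}$ rather than $\tilde\lambda$ cubes carry the full doubling budget $\sqrt{\tilde\lambda}\cdot\tilde\lambda^{-1/2}\sim 1$ at top scale, the total length is bounded by $C_{2}\tilde\lambda^{3/4-\beta}$, with $C_{2}$ depending only on $c$, $C_{1}$, and the $\mathcal C^{k}$-norm of $\tilde g^{-1}$ entering the lifted harmonic equation. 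The main obstacle is the combinatorial sparsity lemma in the previous paragraph: the three-ball inequality must be applied in a sufficiently uniform and quantitative way to rule out that a positive fraction of subcubes inherits the full doubling index of the parent, and producing an explicit $\gamma>0$ (equivalently $\beta>0$) is precisely what makes this proof strictly better than the classical $\tilde\lambda^{3/4}$ estimate.
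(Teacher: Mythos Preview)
The theorem you are attempting to prove is not proved in this paper at all: it is quoted verbatim as a black-box result from Logunov--Malinnikova \cite{LM}, and the paper immediately proceeds to \emph{apply} it (via the rescaling $\tilde g = r^{-2}g$, $\tilde\lambda = r^{2}\lambda$, together with the refined doubling estimate of Lemma~\ref{RefinedDoublingEstimate}) in order to deduce inequality~(\ref{RefinedUB2}) of Theorem~\ref{UpperBound}. There is therefore no proof in the present paper to compare your proposal against.

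That said, your sketch is a recognizable high-level outline of the actual argument in \cite{LM}: the harmonic lift to dimension three, the transfer of the doubling hypothesis to the lifted function, a local nodal estimate in terms of the doubling index on wavelength-scale cubes, and---crucially---a combinatorial sparsity lemma asserting that among the subcubes of a given cube only a sub-full fraction can inherit nearly the parent's doubling index. You have correctly identified this last step as the heart of the matter and the source of the exponent gain $\beta>0$. Two technical cautions: first, the lift $H(x,t)=\psi_{\tilde\lambda}(x)\sinh(t\sqrt{\tilde\lambda})/\sqrt{\tilde\lambda}$ vanishes identically on the slice $\{t=0\}$, which contaminates the nodal set of $H$ with an artificial hypersurface; the standard choice (and the one this paper uses in the proof of (\ref{RefinedUB})) is $H(x,t)=\psi_{\tilde\lambda}(x)e^{t\sqrt{\tilde\lambda}}$. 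Second, the passage from the sparsity exponent $\gamma$ to the final $\beta$ is more intricate than a single dyadic iteration and involves a careful tree-counting argument across scales; your sketch acknowledges this but does not carry it out. In short, your plan is aimed at the right target, but that target lies in \cite{LM}, not in the paper under review.
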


To prove (\ref{RefinedUB2}), suppose $\psi_\lambda$ is an eigenfunction of $\Delta_g$ on $(X, g)$. We cover $X$ by geodesic balls $\{B_{r/2}(x_i) \}_{x_i \in \mathcal I}$ of radius $\frac{r}{2}$ in such a way that the number of them is at most $C_0 r^{-n}$. As we saw earlier this is always possible. We then estimate the size of the nodal set of $\psi_\lambda$ in each $B_{r/2} (x)$ using Theorem \ref{LM}.  To do this, we first define $(\tilde X, \tilde g)= (X, \frac{1}{r^2}g)$. Under such a rescaling, a ball of radius $r$ scales to a ball of radius $1$. Hence we put $\tilde R =1$. Then the equation  $$-\Delta_g \psi_\lambda = \lambda \psi_\lambda \qquad  \text{on} \quad B_{2r}(p),$$ becomes 
$$-\Delta_{\tilde g} \psi_{\lambda} = \tilde \lambda \psi_{ \lambda} \qquad \text{on} \quad \tilde B_{2}(p),$$ with $$\tilde \lambda = r^2\lambda \qquad \text{and} \qquad \psi_{\tilde \lambda} =\psi_\lambda \; . $$ We can see that the doubling condition of Theorem \ref{LM} is valid because for all $s \leq \frac{1}{10}$, using (\ref{LinftyDoublingEstimate})
$$ \frac{\sup_{\tilde B_{2 s} (x)} | \psi_{ \lambda} |^2}{\sup_{ \tilde B_{s} (x)} | \psi_{ \lambda} |^2} = \frac{\sup_{B_{2 s r} (x)} | \psi_{ \lambda} |^2}{\sup_{ B_{s r} (x)} | \psi_{\lambda} |^2}  \leq e^{c r \sqrt{\lambda}}= e^{ c \sqrt{\tilde \lambda}}, $$ for some $c$ that is uniform in $ \tilde \lambda$, $s$, and $x$, and is controlled by $K_1$, $K_2$, and the $\mathcal C^k$ norm of $ ( \tilde g)^{ij}$ on $ \tilde B_{2} (p)$ for some universal $k$.  Therefore, by Theorem \ref{LM} 
$$ \mathcal H_{g} ^1 \Big ( \{ \psi_{\lambda} =0 \} \cap B_{r/2}(p) \Big ) =r^{n-1} \mathcal H_{\tilde g} ^1 \Big ( \{ \psi_{\tilde \lambda} =0 \} \cap \tilde B_{1/2}(p) \Big ) \leq C_2 r^{n-1}  {\tilde \lambda} ^{\frac{3}{4} -\beta }.$$ We emphasize that since $( \tilde g)^{ij} =r^2 g^{ij}$,  for small enough $r_0(g)$ and all $r < r_0(h)$, the $\mathcal C^k$ norm of $ (\tilde g)^{ij}$ on $ \tilde B_{2} (p)$ is bounded by the $\mathcal C^k$ norm of $(g)^{ij}$ on $B_{2r}(p)$. Hence $ C_2$ is independent of $r$, $\lambda$, and $p$,  and  is controlled only by $K_1$ and $K_2$ and $(X, g)$. 
Adding these up,  we get
\begin{align*} \mathcal H_{g} ^1 \Big ( \{ \psi_{\lambda} =0 \} \Big ) \leq  \sum_{x_i \in \mathcal I} \mathcal H_{g} ^1 \Big ( \{ \psi_{\lambda} =0 \} \cap B_{r/2}(x_i) \Big ) & \leq (C_0 r^{-n}) C_2  r^{n-1}  {\tilde \lambda} ^{\frac{3}{4} -\beta } \\
& =c_3 r^{1-2 \beta} \lambda^{\frac{3}{4} - \beta}. \end{align*}

\subsection{Proof of (\ref{RefinedOV2}); Number of singular points for surfaces}
We shall use the results of Dong \cite{Dong} instead of \cite{DF2}, although both methods would work.  Another goal is simplify a less detailed part of the argument of \cite{Dong}. Let us first recall some statements from \cite{Dong}. 

\begin{theo} [Dong \cite{Dong}, Theorems 2.2 and 3.4] Let $(X, g)$ be a smooth Riemannian manifold of dimension $2$, $p \in X$, and $R>0$ so that the geodesic ball $B_{2R}(p)$ is embedded. Suppose $\psi_\lambda$ is a smooth function such that $\Delta_g \psi_\lambda = \lambda \psi_\lambda$ on $B_{2R}(p)$ for some $\lambda \geq 1$. Denote
$$ q_\lambda= | \nabla \psi_\lambda|^2 + \frac{\lambda}{2} |\psi_\lambda | ^2. $$
Then for all $x \in B_{R/2}(p)$ and all $s < \frac{R}{8}$
\begin{equation}\label{DongOV}
\quad  \sum_{z \in Z_{\psi_\lambda} \cap B_{s}(x)}  \left ( \nu_z(\psi_\lambda)-1 \right) \leq \alpha_1 \sqrt{\lambda} + \alpha_2 s^2 \lambda.
 \end{equation}
The constants $\alpha_1, \alpha_2$ are inform in $x$, $s$, and $\lambda$, and depend only on  $(B_{2R}(p), g)$. 
\end{theo}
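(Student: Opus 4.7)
The plan is to follow Dong's strategy, built around the auxiliary function $q_\lambda = |\nabla\psi_\lambda|^2 + \tfrac{\lambda}{2}\psi_\lambda^2$ appearing in the statement. I would first verify that the singular set $\mathcal{S}=\{\psi_\lambda=|\nabla\psi_\lambda|=0\}$ coincides exactly with $\{q_\lambda=0\}$, and that the orders match up quantitatively: if $\psi_\lambda$ vanishes at $z$ to order $\nu_z\geq 2$, then in a geodesic normal chart at $z$ the eigenfunction admits a leading expansion as a homogeneous harmonic polynomial of degree $\nu_z$ (the term $\lambda\psi_\lambda$ in $\Delta_g\psi_\lambda=\lambda\psi_\lambda$ has strictly higher order). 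Consequently $|\nabla\psi_\lambda|^2$ vanishes to order $2(\nu_z-1)$ while $\tfrac{\lambda}{2}\psi_\lambda^2$ vanishes to order $2\nu_z$, so $q_\lambda$ vanishes at $z$ to order exactly $2(\nu_z-1)$. In particular the weighted sum on the left-hand side of (\ref{DongOV}) is one half of the zero count (with multiplicity) of $q_\lambda$ inside $B_s(x)$.

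The core analytic ingredient is the pointwise subharmonic-type inequality
\[ \Delta_g \log q_\lambda \;\geq\; -A(1+\lambda) \qquad \text{on } \{q_\lambda>0\}\cap B_R(p), \]
with $A$ depending only on curvature bounds for $g$ on $B_{2R}(p)$. This is obtained by substituting the eigenvalue equation into the Bochner--Weitzenb\"ock identity for $|\nabla\psi_\lambda|^2$ and then exploiting the two-dimensional algebraic structure of the Hessian of $\psi_\lambda$ (via the Newton inequality $|\nabla^2\psi_\lambda|^2\geq \tfrac{1}{2}(\Delta_g\psi_\lambda)^2$) to control the troublesome gradient term $|\nabla|\nabla\psi_\lambda|^2|^2/q_\lambda^2$ that arises when one computes $\Delta_g\log q_\lambda$ from $\Delta_g q_\lambda$. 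Together with the local factorization $q_\lambda\asymp d_g(\cdot,z_j)^{2(\nu_{z_j}-1)}$ near each singular point $z_j$, this upgrades to the distributional bound
\[ \Delta_g \log q_\lambda \;\geq\; 4\pi\!\!\sum_{z_j\in B_R(p)}(\nu_{z_j}-1)\,\delta_{z_j} \;-\; A(1+\lambda). \]

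To extract the count, I would pair this inequality with a nonnegative cutoff $\chi$ satisfying $\chi\equiv 1$ on $B_s(x)$, $\mathrm{supp}\,\chi\subset B_{2s}(x)$, $|\nabla\chi|\lesssim s^{-1}$, and $|\Delta_g\chi|\lesssim s^{-2}$, using self-adjointness to rewrite $\int\chi\,\Delta_g\log q_\lambda\,dv_g=\int\log q_\lambda\,\Delta_g\chi\,dv_g$:
\[ 4\pi\!\!\sum_{z_j\in B_s(x)}(\nu_{z_j}-1) \;\leq\; \int_{B_{2s}(x)\setminus B_s(x)}\!\! \log q_\lambda\cdot \Delta_g\chi\,dv_g \;+\; A(1+\lambda)\,\mathrm{Vol}_g(B_{2s}(x)). \]
The volume term is bounded by a universal multiple of $\lambda s^2$, which contributes the $\alpha_2 s^2\lambda$ piece.

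The hard part will be bounding the annular integral by a constant multiple of $\sqrt{\lambda}$. After normalizing $\psi_\lambda$ so that $\sup_{B_{4s}(x)}q_\lambda=1$ (which does not affect the singular point count), the task reduces to bounding $\int_{B_{2s}(x)\setminus B_s(x)}|\log q_\lambda|\,dv_g$ by $\sqrt{\lambda}\cdot s^2$. I would obtain this by invoking the Donnelly--Fefferman doubling estimate $N(B_t(x))\leq c\sqrt{\lambda}$ for $\psi_\lambda$, transferring it to $q_\lambda$ via standard interior elliptic $C^1$ estimates at scale $\lambda^{-1/2}$, and then applying a Cartan-type selection lemma to bound $|\log q_\lambda|$ outside an exceptional set of small area. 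The real subtlety is that $\log q_\lambda=-\infty$ exactly at zeros of $q_\lambda$ lying in the annulus: the integral still converges because the singularity is only logarithmic, and its total contribution is controlled by iterating the subharmonic inequality on a slightly larger scale. Combining the two pieces yields the claimed bound $\alpha_1\sqrt{\lambda}+\alpha_2 s^2\lambda$.
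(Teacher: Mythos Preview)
Your overall architecture matches Dong's: the pointwise inequality $\Delta_g \log q_\lambda \geq -A(1+\lambda)$ obtained from Bochner and the two-dimensional Hessian identity is precisely Dong's Theorem~2.2, and the identification of the Dirac masses with weights $4\pi(\nu_z-1)$ is correct. The paper itself does not reprove the theorem; it cites Dong and contributes only a simplification of the second ingredient, namely the growth bound $\sup_{B_{4s}} q_\lambda \big/ \sup_{B_s} q_\lambda \leq \alpha_3 e^{c_2\sqrt{\lambda}}$. For that piece the paper uses the gradient estimate $\sup_{B_s}|\nabla\psi_\lambda|\leq \beta_2\sqrt{\lambda}\,\sup_{B_{s+\lambda^{-1/2}}}|\psi_\lambda|$ of Shi--Xu combined with the Donnelly--Fefferman $L^\infty$ doubling for $\psi_\lambda$; your proposed transfer of the doubling to $q_\lambda$ via interior $C^1$ elliptic estimates at scale $\lambda^{-1/2}$ is essentially the same mechanism.

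Where your sketch departs from Dong, and where it becomes shaky, is the extraction step. Dong does not pair the distributional inequality against a smooth cutoff. He argues via a Jensen--Poisson type formula for the almost-subharmonic function $\log q_\lambda$, recentered at a point of $B_s(x)$ where $q_\lambda$ attains its supremum; this yields directly the sharper intermediate inequality the paper records,
\[
\sum_{z\in Z_{\psi_\lambda}\cap B_s(x)}(\nu_z-1)\;\leq\;\alpha'_1\,\log\!\left(\frac{\sup_{B_{4s}(x)}q_\lambda}{\sup_{B_s(x)}q_\lambda}\right)+\alpha_2 s^2\lambda,
\]
and then the $q_\lambda$-doubling bound finishes. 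Your cutoff route instead leaves you with $\int_{B_{2s}\setminus B_s}\log q_\lambda\cdot\Delta_g\chi\,dv_g$, and since $\Delta_g\chi$ takes both signs on the annulus you genuinely need an $L^1$ bound on $(\log q_\lambda)^{-}$ there, not just a sup-ratio. The Cartan-type lemma you invoke is the wrong tool as stated: Cartan's lemma presupposes a bound on the total Riesz mass of $\log q_\lambda$ in a slightly larger ball, and that mass is exactly $4\pi\sum(\nu_z-1)$, the quantity you are trying to estimate, so the argument is circular. One can repair this by first using the Jensen inequality on the larger ball (which needs only the doubling index, not the mass) to get the $L^1$ control on $(\log q_\lambda)^{-}$, but at that point you have essentially reproduced Dong's route and the cutoff has bought nothing.
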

In fact by a glance at 
the proof of \ref{DongOV} (see Theorem 3.4 of \cite{Dong}, pages 502-503), one sees that the following statement holds:
\begin{equation}\label{DongOVRefined}
\quad  \sum_{z \in Z_{\psi_\lambda} \cap B_{s}(x)}  \left ( \nu_z(\psi_\lambda)-1 \right) \leq \alpha'_1 \log \left ( \frac{\sup_{B_{4s}(x)} q_\lambda}{\sup_{B_s(x)} q_\lambda} \right )+ \alpha_2 s^2 \lambda,
\end{equation}
for some uniform constants $ \alpha'_1$ and $\alpha_2$. 

The estimate (\ref{DongOV}) follows quickly from  (\ref{DongOVRefined}) if one knows that 
$$ s \in \Big (0, \frac{R}{8}  \Big ), x \in B_{\frac{R}{2}}(p): \quad  \frac{\sup_{B_{4s}(x)} q_\lambda}{\sup_{B_s(x)} q_\lambda} \leq \alpha_3 e^{c_2 \sqrt{\lambda}}.$$
The above growth estimate is proved in \cite{Dong} using the theory of frequency functions and monotonicity formulas (see \cite{GaLi, HaL, Lin} for background). However the proof of the monotonicity formula associated to $q_\lambda$ (see pages 498-499) is carried out only for the Euclidean metric and the proof of the upper bound $\sqrt{\lambda}$ on the frequency function is referred to the methods of \cite{Lin}.  Here we give a simpler proof of this growth estimate which is based on gradient estimates for solutions of elliptic equations. More precisely, we show that if doubling estimates (\ref{LinftyDoublingEstimate})
$$ s \in (0, 10r), x \in B_{\frac{R}{2}}(p): \quad \sup _{B_{2 s}(x)} | \psi_{\lambda}|^2 \leq e^{c \, r \sqrt{\lambda}} \sup_{B_{s}(x)} | \psi_{\lambda}|^2 $$
hold, then
\begin{equation} \label{GrowthEstimateQ} s \in ( \lambda^{-\frac{1}{2}}, 2r ), x \in B_{\frac{R}{2}}(p): \quad \frac{\sup_{B_{4s}(x)} q_\lambda}{\sup_{B_s(x)} q_\lambda} \leq \alpha_3 e^{c_2 r \sqrt{\lambda}}, \end{equation} for uniform constants $\alpha_3$ and $c_2$. For the proof we use an application of standard elliptic estimates to the gradient of eigenfunctions, as performed in \cite{SX}.
\begin{theo} [\cite{SX}, Theorem 1] Let $(X, g)$ be a smooth connected compact Riemannian manifold without boundary. Suppose $\psi_ \lambda$ is an eigenfunction of $\Delta_g$ with eigenvalue $\lambda$. Then
$$ \beta_1 \sqrt{\lambda} \sup_X | \psi_\lambda| \leq \sup_X | \nabla \psi_\lambda | \leq \beta_2 \sqrt{\lambda} \sup_X | \psi_\lambda|, $$ for some positive constants $\beta_1$ and $\beta_2$ independent of $\lambda$. 
\end{theo}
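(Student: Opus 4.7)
The plan is to prove the two inequalities separately: the upper bound via a rescaling argument plus standard interior elliptic gradient estimates, and the lower bound via an a priori diameter estimate for nodal domains combined with the mean value inequality.

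For the upper bound, I would fix an arbitrary point $x_0 \in X$ and work in a geodesic normal chart $U$ centered at $x_0$ of radius $r_0$ uniform in $x_0$ (controlled by the injectivity radius of $(X,g)$). In this chart $\Delta_g$ takes the form $a^{ij}(x)\partial_i \partial_j + b^i(x) \partial_i$ with $C^k$ coefficients uniformly bounded on $(X,g)$ and uniformly elliptic. Setting $\mu = \sqrt{\lambda}$ and rescaling $y = \mu x$, the function $\tilde\psi(y) := \psi_\lambda(y/\mu)$ satisfies $\tilde a^{ij}(y)\partial_i\partial_j \tilde\psi + \mu^{-1}\tilde b^i(y) \partial_i \tilde\psi = \tilde\psi$ on the Euclidean ball of radius $\mu r_0$ in $y$-coordinates (which is large once $\lambda$ is large). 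This is a uniformly elliptic equation with $C^k$ coefficients that are bounded independently of $\lambda$. Standard interior gradient estimates for elliptic equations (e.g.\ Gilbarg--Trudinger Theorem~8.32 or Corollary~6.3) then yield
$$|\nabla_y \tilde\psi(0)| \leq C \sup_{|y|\leq 1} |\tilde\psi(y)| \leq C \sup_X |\psi_\lambda|,$$
and undoing the rescaling gives $|\nabla_g \psi_\lambda(x_0)| \leq C\sqrt{\lambda}\, \sup_X |\psi_\lambda|$. Since $x_0$ is arbitrary and $C$ is uniform, this is the desired upper bound.

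For the lower bound, I would use the fact that nodal domains of $\psi_\lambda$ have inradius at most $C/\sqrt{\lambda}$. If $D$ is any nodal domain, then $\lambda$ is the first Dirichlet eigenvalue of $\Delta_g$ on $D$, so by domain monotonicity, every geodesic ball $B_\rho(x) \subset D$ must satisfy $\lambda_1^{\mathrm{Dir}}(B_\rho(x)) \geq \lambda$. On the other hand, a comparison with the Euclidean ball (valid once $\rho$ is below the injectivity radius and the curvature scale) gives $\lambda_1^{\mathrm{Dir}}(B_\rho(x)) \leq C_0/\rho^2$ for some $C_0 = C_0(X,g)$. Hence $\rho \leq \sqrt{C_0}/\sqrt{\lambda}$, and consequently every point of $X$ lies within distance $\sqrt{C_0}/\sqrt{\lambda}$ of the nodal set $Z_{\psi_\lambda}$. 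Now pick $x_* \in X$ realizing $|\psi_\lambda(x_*)| = \sup_X |\psi_\lambda|$, choose $y_* \in Z_{\psi_\lambda}$ with $d_g(x_*,y_*) \leq \sqrt{C_0}/\sqrt{\lambda}$, and join them by a minimizing geodesic $\gamma$. The mean value inequality along $\gamma$ yields
$$\sup_X |\psi_\lambda| = |\psi_\lambda(x_*) - \psi_\lambda(y_*)| \leq d_g(x_*,y_*)\, \sup_X|\nabla \psi_\lambda| \leq \frac{\sqrt{C_0}}{\sqrt{\lambda}}\, \sup_X |\nabla \psi_\lambda|,$$
which is the lower bound with $\beta_1 = 1/\sqrt{C_0}$.

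The main subtle point is the nodal-domain inradius bound used in the lower half; one must verify the Dirichlet eigenvalue comparison $\lambda_1^{\mathrm{Dir}}(B_\rho(x)) \leq C_0/\rho^2$ with a constant uniform in $x\in X$ and in $\rho \leq \rho_0(g)$. This follows from a standard test function argument (e.g.\ a radial bump concentrated in $B_\rho$ pulled back via normal coordinates, whose Rayleigh quotient is $O(\rho^{-2})$ with an error controlled by sectional curvature), and uniformity in $x$ is immediate from the compactness of $X$. The upper bound is essentially routine once one notices that $\mu^{-1}$ appears in front of the lower-order term after rescaling, so no feature of $\lambda$ interferes with the elliptic estimate.
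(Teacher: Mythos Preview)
Your proposal is correct and matches the approach the paper attributes to \cite{SX}: the upper bound via rescaling and the Gilbarg--Trudinger interior gradient estimate (the paper's footnote explicitly cites Theorem~8.32 of \cite{GT}), and the lower bound via Br\"uning's fact that every ball of radius $\gamma_0/\sqrt{\lambda}$ contains a nodal point, combined with the mean value inequality along a geodesic. The paper does not itself prove this theorem but only quotes it and sketches the method; your nodal-domain inradius argument via Dirichlet eigenvalue monotonicity is precisely the standard derivation of the Br\"uning constant the paper invokes.
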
 
In fact by looking at the proof of this theorem we notice that a stronger statement holds. More precisely, one can see that (see page 23, Fact (1) and Eq (6)) for all $s < \text{inj}(g)/4$
$$ \beta_1  \sqrt{\lambda} \sup_{B_{s}(x)} |\psi_\lambda | \leq \sup_{B_{s+  \frac{\gamma_0}{ \lambda^{{1/2}}}}(x)} | \nabla \psi_\lambda | $$
\begin{equation}\label{SXUB} \sup_{B_{s}(x)} | \nabla \psi_\lambda | \leq \beta_2 \sqrt{\lambda} \sup_{B_{s+\frac{1}{\lambda^{ {1/2}}}}(x)} | \psi_\lambda |, \end{equation}
where $\gamma_0$ is a positive constant that depends only on the Riemannian manifold $(X ,g)$. In fact it is the Br\"uning constant that guarantees that in every ball of radius $\frac{\gamma_0}{ \lambda^{{1/2}}}$ there is a zero of $ \psi_\lambda$.  However, to prove (\ref{GrowthEstimateQ}) we only need the upper bound (\ref{SXUB}) for the gradient \footnote{This is proved easily by a rescaling argument and elliptic estimates such as Theorem 8.32 in Gilbarg-Trudinger \cite{GT}}. Let $s \in ( \lambda^{-\frac{1}{2}}, 2r)$. Then since $4s +\lambda^{-1/2} < 10r$, using our doubling estimates (\ref{LinftyDoublingEstimate}) three times, we get
\begin{align*} \sup_{B_{4s}(x)} q_\lambda & = \sup_{B_{4s}(x)} \Big ( | \nabla \psi_\lambda|^2 + \frac{\lambda}{2} |\psi_\lambda | ^2 \Big ) \\
 & \leq  \beta'_2 \lambda \sup_{B_{4s+ \frac{1}{\lambda^{1/2}}}(x)} | \psi_\lambda|^2 \\
& \leq \beta'_2 \lambda  e^{3 c r \sqrt{\lambda}} \sup_{B_{\frac{s}{2}+ \frac{1}{8\lambda^{{1/2}}}}(x)} | \psi_\lambda|^2\\
& \leq 2 \beta'_2  e^{3 c r \sqrt{\lambda}}  \sup_{B_s(x)} q_\lambda. 
\end{align*} This proves (\ref{GrowthEstimateQ}) with $\alpha_3= 2 \beta'_2$ and $c_2 =3c$. 

To finish the proof of our upper bounds for the number of singular points for surfaces, we apply (\ref{GrowthEstimateQ}) to the inequality (\ref{DongOVRefined}) and obtain
$$ \sum_{z \in Z_{\psi_\lambda} \cap B_{s}(x)}  \left ( \nu_z(\psi_\lambda)-1 \right) \leq \alpha''_3 r \sqrt{\lambda} + \alpha_2 s^2 \lambda. $$ We now put $s= r^{\frac{1}{2}} \lambda^{-\frac{1}{4}}$. We underline that this choice of $s$ is in fact in the allowable range $(\lambda^{-\frac{1}{2}}, 2r)$ because $ r \geq \lambda^{-\frac{1}{2}}$. From this, (\ref{RefinedOV2}) follows immediately. 


\subsection{Proof of Theorem (\ref{UpperBoundQE}); Upper bounds for QE eigenfunctions}
This theorem follows quickly from the lemma below combined with Theorem \ref{UpperBound}. 
\begin{lem} \label{QElemma} Let $\{ \psi_j \}_{j \in S}$ be a  sequence of eigenfunctions of $\Delta_g$ with eigenvalues $\{\lambda_j\}_{j \in S}$ such that for all $r \in (0, \text{inj}(g)/2)$ and all $x \in X$
\begin{equation} \label{QEonX2}
\int_{B_r(x)} |\psi_{j}|^2 \to \frac{\text{Vol}_g(B_r(x))}{\text{Vol}_g(X)}, \qquad  \lambda_j \xrightarrow{j \in S} \infty.
\end{equation}
Then there exists $r_0(g)$ such that for each $r \in (0, r_0(g))$ there exists $\Lambda_r$ such that for $ \lambda_j \geq \Lambda_r$ we have
$$K_1 r^n\leq \int _{B_{r}(x)} | \psi_{j}|^2 \leq K_2 r^n, $$ uniformly for all $x \in X$. Here, $K_1$ and $K_2$ are independent of $r$, $j$, and $x$. 
 \end{lem}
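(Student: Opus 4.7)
The plan is to upgrade the pointwise-in-$x$ convergence (\ref{QEonX2}) to convergence uniform in $x$ for each fixed small $r$, and then combine this with the volume comparison $a r^n \leq \text{Vol}_g(B_r(x)) \leq b r^n$ which holds uniformly in $x$ for $r < r_0(g)$ (a standard consequence of normal coordinates, or of Bishop-Gromov). Setting $V_{g,r}(x) := \text{Vol}_g(B_r(x))/\text{Vol}_g(X)$ and $F_j(x) := \int_{B_r(x)}|\psi_j|^2$, the hypothesis gives $F_j(x)\to V_{g,r}(x)$ pointwise, and the conclusion reduces to showing that this convergence is uniform in $x\in X$, since $V_{g,r}(x)$ is then sandwiched between two fixed multiples of $r^n$.

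The key observation is an inclusion-sandwich: whenever $d(x,y)<\delta<r$, the triangle inequality gives $B_{r-\delta}(y) \subset B_r(x) \subset B_{r+\delta}(y)$, hence
\[ \int_{B_{r-\delta}(y)} |\psi_j|^2 \;\leq\; F_j(x) \;\leq\; \int_{B_{r+\delta}(y)} |\psi_j|^2. \]
A direct volume computation (again via normal coordinates) yields the annular bound $V_{g,r+\delta}(y) - V_{g,r-\delta}(y) = O(\delta r^{n-1})$ uniformly in $y$, which becomes a small fixed multiple of $r^n$ once $\delta$ is taken as a small fixed fraction of $r$.

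The uniformity step then proceeds by a finite-cover argument. Fix $\delta = r/10$ and cover the compact manifold $X$ by finitely many balls $\{B_\delta(x_i)\}_{i=1}^{N}$ with $N = N(r,g)$. Applying the pointwise hypothesis (\ref{QEonX2}) at each of the finitely many centers $x_i$ with both radii $r\pm\delta$, we find $\Lambda_r$ such that for $\lambda_j \geq \Lambda_r$,
\[ \left| \int_{B_{r\pm\delta}(x_i)} |\psi_j|^2 - V_{g,r\pm\delta}(x_i) \right| < \eta, \qquad i = 1,\ldots,N, \]
with $\eta$ chosen to be a small fixed fraction of $r^n$. For any $x \in X$, pick $i$ with $d(x,x_i) < \delta$, apply the sandwich, and combine with the annular estimate to conclude that $F_j(x)$ differs from $V_{g,r}(x)$ by $O(\eta + \delta r^{n-1})$, i.e., a small fixed fraction of $r^n$. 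This pins $F_j(x)$ inside an interval of the form $[K_1 r^n, K_2 r^n]$ with $K_1, K_2$ depending only on $(X,g)$.

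The only subtlety is that the threshold $\Lambda_r$ depends on the cardinality $N$ of the cover and therefore on $r$, but this matches the lemma's statement, which permits $\Lambda_r$ to depend on $r$. The main obstacle is really just this compactness-to-uniformity step: the pointwise hypothesis contains no quantitative rate of convergence in $x$, so the uniformization must be done by finite cover combined with the monotonicity of the map $r \mapsto \int_{B_r(y)}|\psi_j|^2$.
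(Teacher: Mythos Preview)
Your proposal is correct and follows essentially the same route as the paper: a finite cover of $X$, application of the pointwise hypothesis at the finitely many centers with two nearby radii, and the triangle-inequality sandwich $B_{r-\delta}(x_i)\subset B_r(x)\subset B_{r+\delta}(x_i)$. The paper merely uses the cruder parameters $\delta=r/2$ (so the sandwiching balls are $B_{r/2}(x_i)$ and $B_{2r}(x_i)$) and skips the annular volume estimate, passing directly to the $K_1 r^n$ and $K_2 r^n$ bounds rather than first establishing uniform convergence; but the argument is the same.
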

We point out that this lemma is obvious when $x$ is fixed, however to obtain uniform $L^2$ estimates we need to use a covering argument as follows. 

\begin{proof} First we choose $r_0(g) < \frac{\text{inj}(g)}{4}$ small enough so that for all $ r <r_0(g)$
$$ a_1 r^n \leq \text{Vol}(B_{r/2}(x)) <  \text{Vol}(B_{2r}(x))  \leq a_2 r^n,$$ for some positive $a_1$ and $a_2$ that are independent of $r$ and $x$. Next, we cover $(X, g)$ using geodesic balls $\{B_{r/2}(x_i) \}_ {x_i \in \mathcal I}$ such that $\text{card}\, (\mathcal I)$ is at most $C_0r^{-n}$, where $C_0$ depends only on $(X, g)$. The existence of such a covering was discussed in the proof of (\ref{RefinedUB}).  For each $x_i \in \mathcal I$, by using (\ref{QEonX2}) twice, we can find $\Lambda_{i, r}$ large enough so that for $\lambda_j \geq \Lambda_{i, r}$
$$K_1 r^n\leq \int _{B_{r/2}(x_i)} | \psi_{j}|^2  \leq \int _{B_{2r}(x_i)} | \psi_{j}|^2 \leq K_2 r^n, $$
with $K_1= \frac{a_1}{2\text{Vol}(X)}$  and $K_2= \frac{2a_2}{\text{Vol}(X)}$. We claim that $ \Lambda_r =\max_{i \in \mathcal I} \{ \Lambda_{i, r} \}$ would do the job for all $x$ in $X$. So let $x$ be in $X$ and $r$  be as above. Then $x \in B_{r/2}(x_i) $ for some $i \in \mathcal I$ and clearly one has $B_{r/2}(x_i) \subset  B_r(x) \subset B_{2r}(x_i) $.  This and the above inequalities prove the lemma. 
\end{proof}

\section*{Acknowledgments}

We are grateful to Gabriel Rivi\`ere and Steve Zelditch for their helpful comments on the earlier version of this paper.

\end{document}